\crefname{algocfline}{Algorithm}{Algorithms}
\crefname{equation}{}{}
\crefname{prop}{Proposition}{Propositions}
\crefname{enumi}{}{}
\crefname{figure}{Figure}{Figures}
\crefname{claim}{Claim}{Claims}
\crefname{subsection}{Subsection}{Subsections}
\newtheorem{theorem}{Theorem}[section]
\newtheorem{lemma}[theorem]{Lemma}
\newtheorem{claim}[theorem]{Claim}
\theoremstyle{definition}
\newcommand{\prob}[1]{\mathbb{P}\left[#1\right]}
\newcommand{\variance}[1]{\mathbb{V}\left[#1\right]}
\newcommand{\expec}[1]{\mathbb{E}\left[#1\right]}
\newcommand{\condvariance}[2]{\mathbb{V}\left[#1 \;\middle|\; #2\right]}
\def\cE{\mathcal{E}}
\def\tY{\widetilde{Y}}
\def\tZ{\widetilde{Z}}
\def\hX{\widehat{X}}
\def\hZ{\widehat{Z}}
\def\hY{\widehat{Y}}
\def\hc{\hat{c}}
\def\eps{\varepsilon}
\newenvironment{proofclaim}[1][Proof of Claim]{\begin{proof}[#1]}{\end{proof}}
\newcommand{\ind}[1]{\mathds{1}_{\{#1\}}}
\newcommand{\indev}[1]{\mathds{1}_{#1}}
\title[A central limit theorem for the order of the giant component and $k$-core]{A short proof of a central limit theorem for the order of the giant component and $k$-core}
\author{Michael Anastos$^1$} 
\address{$^1$Institute of Science and Technology Austria (ISTA), 3400 Klosterneurburg, Austria}
\author{Joshua Erde$^2$}
\address{$^2$School of Mathematics, University of Birmingham, Birmingham, UK}
\author{Mihyun Kang$^3$}
\address{$^3$Institute of Discrete Mathematics, Graz University of Technology, Austria}
\author{Vincent Pfenninger$^4$}
\address{$^4$Department of Mathematics, London School of Economics, London, United Kingdom}
\email{michael.anastos@ist.ac.at}
\email{j.erde@bham.ac.uk}
\email{kang@math.tugraz.at}
\email{v.pfenninger@lse.ac.uk}
\date{\today}
\subjclass[2020]{05C80, 60F05 (primary)}
\begin{document}
\begin{abstract}
In this note we outline a new and simple approach to proving central limit theorems for various `global' graph parameters which have robust `local' approximations, using the Efron--Stein inequality, which relies on a combinatorial analysis of the stability of these approximations under resampling an edge. As an application, we give short proofs of a central limit theorem for the order of the giant component and of the $k$-core for sparse random graphs.
\end{abstract}

\maketitle

\section{Introduction}
In discrete probability, and in particular the theory of random graphs, there has been much interest in establishing \emph{central limit theorems} for various parameters of interest. More precisely, given a sequence $(Z_n)_{n \in \mathbb{N}}$ of random variables defined on the probability spaces $G(n,p)$, we are interested in when 
\[
X_n \coloneqq \frac{Z_n - \expec{Z_n}}{\sqrt{\variance{Z_n}}} \quad\indist\quad \mathcal{N}(0,1),
\]
as $n \rightarrow \infty$, that is, when $X_n$ converges in distribution to the standard normal distribution. Here, by $G(n,p)$ we denote the binomial random graph, that is, the random graph on $n$ vertices where each edge appears independently with probability $p$.

Early work on this topic established such results for various types of \emph{subgraph counts} in random graphs, and many different techniques have been developed to deal with these parameters --- the method of moments \cite{K84,KR83,R88}, Stein's method \cite{Barbour1989}, $U$-statistics and related methods \cite{J94,NW88}, and martingale methods \cite{BJKR90,dJ96}. Moreover, these methods can also be used to show \emph{multivariate central limit theorems} for finite collections of subgraph counts, establishing \emph{joint normality} of the rescaled random variables. In particular, this implies that, for appropriate ranges of $p$ and excluding some pathological cases, any finite linear combination of these parameters also satisfies a central limit theorem, which in turn implies central limit theorems for a broad class of `local' graph parameters --- those which can be determined by looking at bounded-order subgraphs of $G(n,p)$.

More recently, central limit theorems have been proven for  more `global' graph parameters. An early example of this comes from the work of Pittel and Wormald on the order of the giant component in a supercritical sparse random graph \cite{PW05,PW08} which used enumerative/analytic methods to establish a central limit theorem. More modern proofs \cite{BCOK14,BR12} involve the analysis of a graph exploration process using martingale methods. Central limit theorems have also been shown for the order of the $k$-core \cite{COCKS19,JL08} and very recently the rank and the matching number \cite{GKSS23,GKSS24,K17}, both of which analyse certain peeling processes on $G(n,p)$, using martingale methods or the differential equations method, respectively.

Recent work of the authors \cite{AEKP25} establishes a central limit theorem for another `global' graph parameter, the \emph{circumference}, which is the  length of the longest cycle. Rather than analysing the evolution of some random process, the proof instead employs a \emph{perturbative} argument using more elementary, combinatorial methods based on the Efron--Stein inequality, using the joint normality of certain subgraph counts as a `black-box'. However, the application in \cite{AEKP25} is still quite complicated and technical, due in part to certain `non-monotonicity' issues that arise within the structural analysis in the particular case of the circumference. 

The purpose of this note is to demonstrate the simplicity of the method developed by the authors in~\cite{AEKP25} by giving some short proofs of known results of central limit theorems ---  for the order of the giant component and the $k$-core in sparse random graphs in their respective supercritical regimes. 
\begin{theorem}\label{t:giant}
Let $c>1$, let $p=\frac{c}{n}$ and let $L$ be the largest component in $G(n,p)$. Then $|L|$ satisfies a central limit theorem.
\end{theorem}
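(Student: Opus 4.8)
The plan is to follow the approach of \cite{AEKP25}: compare $|L|$ --- equivalently the number $Z:=n-|L|$ of vertices lying outside the giant, which satisfies a central limit theorem exactly when $|L|$ does --- with a family of ``local'' statistics indexed by a cut-off $K\in\mathbb{N}$, and to control the error of the approximation by the Efron--Stein inequality. Let $S_K$ be the number of vertices of $G(n,p)$ lying in a component of order at most $K$. Since $|L|\to\infty$ in probability, with probability $1-o(n^{-2})$ we have $|L|>K$, and then every vertex counted by $S_K$ lies outside $L$, so $S_K\le Z$ and the error $E_K:=Z-S_K$ equals the number of vertices lying in a component other than $L$ of order greater than $K$. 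I would establish (a) a central limit theorem for each $S_K$, and (b) the variance bound $\variance{E_K}=O(n e^{-\beta K})$ for some constant $\beta=\beta(c)>0$, and then combine the two by a standard approximation argument for weak convergence.

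For (a), write $S_K=\sum_H v(H)\, N_H$, where $N_H$ is the number of components of $G(n,p)$ isomorphic to $H$ and the sum runs over isomorphism types of connected graphs $H$ on at most $K$ vertices. Since $p=c/n$, the contribution of all $H$ other than trees is $O(1)$ in expectation, hence negligible on the scale $\sqrt n$, while for a tree $T$ one can expand $N_T$ by inclusion--exclusion over the ``$S$ spans an induced copy of $T$ and no edge leaves $S$'' events and then replace each resulting ``no edge leaves $S$'' factor by its expectation, incurring only a lower-order $L^2$-error, after which $S_K$ becomes a fixed linear combination of subgraph counts in $G(n,c/n)$. By the classical joint central limit theorem for such counts, $(S_K-\expec{S_K})/\sqrt n$ converges in distribution to $\cN(0,\sigma_K^2)$ for some $\sigma_K^2\ge 0$, with $\variance{S_K}/n\to\sigma_K^2$.

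For (b) --- the heart of the argument --- I would apply the Efron--Stein inequality to the $\binom n2$ independent indicators of the potential edges of $G(n,p)$, obtaining $\variance{E_K}\le\tfrac12\sum_e\expec{(E_K-E_K^{(e)})^2}$, where $E_K^{(e)}$ arises from $E_K$ by resampling the indicator of the pair $e$. On the high-probability event that every component of $G(n,p)$ other than $L$ has order at most $C\log n$ for a suitable $C=C(c)$, $E_K$ is the number of vertices in components of order in $(K,C\log n]$, so resampling the indicator of $e$ leaves $E_K$ unchanged unless it alters this family of components. The pairs $e$ for which it can do so meet, in $G$ or in the resampled graph, either a non-giant component of order greater than $K$ --- in which case the change is at most a bounded multiple of that component's order --- or a bridge of $L$ whose deletion splits off a piece of order greater than $K$, in which case the change is at most the order of that piece. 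Summing the squared differences --- and using that the resampled indicator of a fixed pair differs from the original only with probability $\Theta(1/n)$ --- bounds $\variance{E_K}$ in terms of two first-moment quantities: the expected number of vertices in non-giant components of order greater than $K$, and the expected sum over the bridges of $L$ of the squared order of the cut-off piece, restricted to pieces of order greater than $K$. Both are $O(n e^{-\beta K})$, by the exponential decay of the sizes of subcritical-type components and of the pendant subtrees of the $2$-core of $L$ respectively. The $o(n^{-2})$-probability exceptional event is absorbed by the trivial bound $E_K\le n$.

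Finally, to assemble the pieces set $\hZ_n:=(Z-\expec{Z})/\sqrt n$. From the decomposition $\hZ_n=(S_K-\expec{S_K})/\sqrt n+(E_K-\expec{E_K})/\sqrt n$, the bound $\limsup_n\variance{E_K}/n\le\eps_K$ with $\eps_K:=O(e^{-\beta K})$, and $|e^{ix}-1|\le|x|$, a comparison of characteristic functions gives $\limsup_n\bigl|\expec{e^{it\hZ_n}}-e^{-\sigma_K^2 t^2/2}\bigr|\le|t|\sqrt{\eps_K}$ for every real $t$. As $\sigma_K^2\to\sigma^2$ when $K\to\infty$ for some $\sigma^2\in(0,\infty)$ --- the positivity being classical and forcing $\sigma^2=\lim_n\variance{Z}/n=\lim_n\variance{|L|}/n$ --- letting $K\to\infty$ shows that $\hZ_n$ converges in distribution to $\cN(0,\sigma^2)$; together with $\variance{|L|}/n\to\sigma^2$ this yields $(|L|-\expec{|L|})/\sqrt{\variance{|L|}}\to\cN(0,1)$, i.e.\ the central limit theorem for $|L|$. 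The step I expect to be the main obstacle is (b): it rests on a careful case analysis of how a single edge-resampling redistributes the mass carried by small components, and especially on a quantitative description of the pendant-tree fringe of the giant --- in particular that a uniformly random bridge of $L$ cuts off only a subtree of bounded expected order --- so that the ``bad'' pairs, above all the bridges of $L$, contribute only $O(n e^{-\beta K})$ to the Efron--Stein sum and not the $\Theta(n)$ one might naively fear. The remaining ingredients --- the reduction in (a) of $S_K$ to a linear combination of subgraph counts, and the treatment of the negligible exceptional event --- are routine.
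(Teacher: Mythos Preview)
Your approach is correct and follows the same high-level strategy as the paper: approximate $|L|$ by a local statistic, bound the variance of the error via Efron--Stein, invoke a CLT for the local statistic, and assemble. The paper's local proxy is $\phi_\ell(v)=\mathds{1}\{\partial_G(v,\ell)\neq\varnothing\text{ or }|B_G(v,\ell)|>\log^4 n\}$ rather than your $\mathds{1}\{|C_v|>K\}$, and it cites a black-box CLT for weighted neighbourhood sums (Lemma~\ref{l:trunc}) in place of your reduction of $S_K$ to tree-component counts; the final assembly is via Slutsky rather than characteristic functions. These are cosmetic differences.

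Where your proposal diverges is in the structure of step (b), and here you are making life harder than necessary. You flag the ``bridge of $L$'' case as the main obstacle, requiring control of the pendant-tree fringe of the giant. But this case is \emph{the same} as the ``non-giant component'' case once you parametrise by $G^-:=G-e$ rather than $G$: the piece cut off by deleting a bridge $e=uv$ of $L^+$ is nothing other than the component $C_v^-$ (or $C_u^-$) of the remainder $R^-:=G^- - L^-$. The paper's organising observation is that, on the high-probability event $\cE$, writing $W:=C_u^-\cup C_v^-$ with the convention $C_w^-=\varnothing$ for $w\in L^-$, one has $|E_K^+-E_K^-|\le |W|$ always, and $E_K^+=E_K^-$ whenever $|W|\le K$. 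This single statement subsumes your case split (two small components merging past $K$, a small component absorbed into the giant, and a bridge of $L$ cut) and reduces (b) to the elementary first-moment bound
\[
\expec{|W|^2\,\ind{|W|\ge K}\,\indev{\cE}}\ \le\ \sum_{j\ge K} j^2\binom{n-2}{j-2} j^{j-2} p^{j-2}(1-p)^{j(n-j)}\ =\ O\bigl((ce^{1-c})^{K}\bigr),
\]
which is Lemma~\ref{l:basicprop}. No separate analysis of the $2$-core or the pendant subtrees of $L$ is needed; the ``bad bridges'' you worry about are already counted here, because a bridge with large cut-off piece is, from the $G^-$ side, simply a pair with one endpoint in a large component of $R^-$.
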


\begin{theorem}\label{t:core}
Let $k \geq 3$, and let $\hc_k$ be the unique real number in $(k, \infty)$ such that 
\begin{equation}\label{e:hc_k}
\mathbb{P}\left[\mathrm{Poisson}\left(\hc_k\right) \leq k-1 \right] = (e\hc_k)^{-1}.
\end{equation}
Let $c> \hc_k$, let $p=\frac{c}{n}$ and let $K$ be the $k$-core of $G(n,p)$. Then $|V(K)|$ satisfies a central limit theorem.
\end{theorem}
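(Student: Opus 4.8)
The plan is to deduce \Cref{t:core} from the Efron--Stein inequality by exhibiting a \emph{robust local approximation} of $|V(K)|$: a statistic that (i) is, after truncation, a fixed linear combination of bounded-order subgraph counts of $G(n,p)$, hence asymptotically normal by the classical multivariate central limit theorem for subgraph counts used as a black box in \cite{AEKP25}, and (ii) differs from $|V(K)|$ by a quantity of variance $o(n)$. Write $R_n = |V(K)|$. For $t\in\mathbb N$, call a vertex $v$ of a graph \emph{$t$-robust} if it survives $t$ rounds of $k$-peeling (each round simultaneously deletes all vertices of current degree $<k$). Since peeling never adds vertices, the $t$-robust vertices form a decreasing (in $t$) family of supersets of $V(K)$ with intersection $V(K)$, and --- crucially --- whether $v$ is $t$-robust is determined by the radius-$(t+1)$ ball $B_{t+1}(v)$. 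I would then set
\[
R_n^{(t,s)} \coloneqq \#\bigl\{v \in [n] : B_{t+1}(v) \text{ is a tree on at most } s \text{ vertices and } v \text{ is } t\text{-robust}\bigr\},
\]
which, each summand depending only on the rooted isomorphism type of a bounded-order ball, expands by inclusion--exclusion over rooted trees on at most $s$ vertices as a fixed ($n$-independent) linear combination of rooted subgraph counts.

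Since $p=c/n$, every bounded tree has expected count $\Theta(n)$, so $\variance{R_n^{(t,s)}} = \sigma_{t,s}^2 n + o(n)$ and $(R_n^{(t,s)}-\expec{R_n^{(t,s)}})/\sqrt n$ tends in distribution to $\mathcal N(0,\sigma_{t,s}^2)$ by joint asymptotic normality of the relevant subgraph counts; moreover for some small pair $(t_0,s_0)$ the limit is non-degenerate, $\sigma_{t_0,s_0}^2>0$, which is a finite covariance computation for small-tree counts (or a consequence of the known bound $\variance{R_n}=\Theta(n)$ of \cite{JL08,COCKS19}). By local weak convergence of $G(n,c/n)$ to the Poisson Galton--Watson tree $\mathrm{PGW}(c)$, $\expec{R_n^{(t,s)}}/n$ converges to the probability that the root of $\mathrm{PGW}(c)$ is $t$-robust with a small tree ball, while $\expec{R_n}/n\to\prob{\text{root of }\mathrm{PGW}(c)\in k\text{-core}}=\psi_k(c)>0$ by the Pittel--Spencer--Wormald/Janson--{\L}uczak analysis, valid precisely because $c>\hc_k$; these two limits agree up to an error that tends to $0$ as $s\to\infty$ and then $t\to\infty$.

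It remains to show that, choosing $s=s(t)$ superexponential in $t$ and letting $t\to\infty$, $\limsup_n\variance{R_n-R_n^{(t,s)}}/n\to 0$; combined with the above and the standard approximation lemma this gives $(R_n-\expec{R_n})/\sqrt n\to\mathcal N(0,\sigma^2)$ with $\sigma^2=\lim_t\sigma_{t,s(t)}^2\ge\sigma_{t_0,s_0}^2>0$, and $\variance{R_n}/n\to\sigma^2$ by a squeeze, so $(R_n-\expec{R_n})/\sqrt{\variance{R_n}}\to\mathcal N(0,1)$. Applying the Efron--Stein inequality to $f=R_n-R_n^{(t,s)}$ with respect to the $\binom n2$ independent edge-indicators and using $p=c/n$ and edge symmetry, the task reduces to showing $\expec{(\Delta_e f)^2}\to 0$ for a single slot $e=\{1,2\}$, where $\Delta_e f$ is the change in $f$ on switching $e$ from absent to present. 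The combinatorial heart is a \emph{stability claim}: $\Delta_e f=0$ unless a ``bad event'' $\mathcal B_e$ occurs, the union of (1) the peeling cascade $V(K(G+e))\setminus V(K(G))$ triggered by inserting $e$ escaping the radius-$O(t)$ ball of $\{1,2\}$, (2) some vertex in that ball having a radius-$(t+1)$ ball that fails to be a tree on $\le s$ vertices, in $G$ or $G+e$, and (3) some vertex in that ball being $t$-robust but not in the $k$-core, in $G$ or $G+e$. Indeed, off $\mathcal B_e$ every vertex whose contribution to $R_n^{(t,s)}$ changes has $t$-robustness equal to $k$-core membership and the cascade is confined to the bounded ball, so the changes to $R_n$ and $R_n^{(t,s)}$ coincide term by term. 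Since $|\Delta_e f|\le C_e+|B_{O(t)}(\{1,2\})|$ with $C_e$ the cascade size, it suffices to bound $\expec{(C_e+|B_{O(t)}(\{1,2\})|)^2\indev{\mathcal B_e}}$ via Cauchy--Schwarz, using moment bounds on $C_e$ and on the bounded ball together with $\prob{\mathcal B_e}\to 0$.

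The main obstacle is exactly this quantitative stability of the $k$-core under toggling one edge: (a) the cascade $C_e$ and its \emph{reach} must have tails decaying in $t$ uniformly in $n$ fast enough to beat the $\Theta(c^t)$ growth of the neighbourhood, so that $\prob{\mathcal B_e^{(1)}}\to 0$ and $\expec{C_e^2\indev{\mathcal B_e}}\to 0$; and (b) the expected number of vertices in the radius-$O(t)$ ball of $\{1,2\}$ that are $t$-robust but not in the $k$-core must tend to $0$ as $t\to\infty$. Both follow from $c>\hc_k$ making the fixed point in \eqref{e:hc_k} an \emph{attracting} one: this lets one couple the relevant peeling and exploration processes with \emph{subcritical} branching processes of explicit contraction rate, yielding the needed exponential decay, and closing the constants is precisely where $c>\hc_k$ (rather than merely $c$ large enough for a nonempty $k$-core) enters, together with some care in balancing $t$, $s$, and the cascade-containment radius. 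This is the $k$-core analogue of the ``non-monotonicity'' difficulty met for the circumference in \cite{AEKP25}, but cleaner here since the $k$-core is monotone under edge addition. Finally, \Cref{t:giant} follows by the same scheme, with ``$v$ lies in a component of order $>s$, explored as a tree'' replacing $t$-robustness and the exploration of a component --- which in the supercritical regime is either the giant or of exponential-tailed order, uniformly in $n$ --- replacing the peeling cascade.
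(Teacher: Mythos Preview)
Your high-level framework matches the paper's exactly: approximate $|V(K)|$ by a weighted $\ell$-neighbourhood count, bound the variance of the approximation error via the Efron--Stein inequality by analysing the effect of toggling a single edge, and invoke a black-box CLT for the truncated neighbourhood count together with Slutsky. The substantive difference is in the \emph{choice} of local approximation, and this is where your proposal leaves a real gap.

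You take $\phi_t(v)=\ind{v\text{ is }t\text{-robust}}$, i.e.\ $v$ survives $t$ rounds of parallel peeling; this genuinely depends on the full ball $B_{t+1}(v)$. The paper instead takes $\phi_\ell(v)=\ind{v\in K(v,\ell)}$, where $K(v,\ell)$ is obtained by peeling inside $B_G(v,\ell)$ while \emph{freezing} the sphere $\partial_G(v,\ell)$ as though it were in the core. The key property of this choice is that $\phi_\ell(v)$ is determined not by the whole ball but by the \emph{mantle component} $C_v$ of $v$ together with its edge boundary: any $\ell$-local peeling sequence for $v$ extends to a global one and hence stays inside $C_v$. Consequently, when one toggles $e=\{u,v\}$, the set $D$ of vertices at which the error $\phi-\phi_\ell$ changes satisfies $D\subseteq W\coloneqq C_u^-\cup C_v^-$, and moreover $D=\varnothing$ whenever $|W|<\ell$. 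The entire Efron--Stein step then collapses to the single first-moment estimate $\expec{|W|^2\ind{|W|\ge\ell}\indev{\cE}}\to 0$, which is exactly what the paper's \cref{lem:kcoreprop} supplies under $c>\hc_k$.

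With your $\phi_t$, by contrast, $D$ is a priori only contained in the cascade together with the radius-$(t{+}1)$ ball about $e$, and you need every vertex in that ball to have $t$-robustness equal to core membership. This forces precisely the balancing act you flag: the ball has order $c^{t}$ while the per-vertex probability of ``$t$-robust but not in core'' decays at best like $\alpha^{-t}$. The first-moment bound that the paper actually proves gives $\alpha=(ec\cdot\prob{\mathrm{Poisson}(c)\le k-1})^{-1}$, and $c>\hc_k$ yields only $\alpha>1$, not the $\alpha>c$ that a union bound over the ball would require. Your assertion that ``closing the constants is precisely where $c>\hc_k$ enters'' is therefore not justified, and your reading of $\hc_k$ as an attracting-fixed-point threshold for the peeling recursion is off: $\hc_k$ is defined so that the crude first-moment count of $k$-degenerate trees on $j$ vertices, namely $\binom{n}{j}j^{j-2}p^{j-2}\prob{\mathrm{Bin}(n-j,p)\le k-1}^{j}$, becomes summable in $j$. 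The paper's choice of $\phi_\ell$ sidesteps the ball-growth-versus-decay competition entirely; with yours, you would need either a stronger hypothesis on $c$ or a genuinely sharper (non-union-bound) control of the bad event, which you have not supplied.
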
  

It is straightforward to show that $\hc_k$ is well defined and that $\hc_k = k + \sqrt{2 k \log k} + O(\sqrt{k})$ as $k \rightarrow \infty$.
For comparison, the $k$-core threshold $c_k$, i.e., the threshold value  of $c$ for the existence of a (non-empty) $k$-core, is shown in \cite{PSW96} to be asymptotically $k + \sqrt{k\log k} + O(\log k)$, and so, while $\hc_k$ in \Cref{t:core} is not quite sharp, it is very close to $c_k$. We also note that with a bit more care, it is possible to extract quantitative bounds on the variances in \Cref{t:core,t:giant}, as in \cite{AEKP25}.

The proofs for both \cref{t:core,t:giant} follow the same rough idea: We take a sequence of `local' approximations to the order of the giant component (or $k$-core) by taking, for each vertex $v \in V(G)$, an approximation of the indicator of the event that $v$ lies inside the giant component (or inside the $k$-core, respectively), based on the `local' structure of $G \sim G(n,p)$ around $v$, taking into account larger and larger neighbourhoods as our sequence develops.

A relatively standard application of Stein's method will show that each of these local approximations satisfies a central limit theorem and so, modulo some small technical details, it is sufficient to bound in a sufficiently strong manner the errors of these approximations, which we do using the Efron--Stein inequality to bound the variance of these errors.

Roughly, the Efron--Stein inequality allows us to bound the variance of a graph parameter by bounding the expected effect of \emph{resampling a single edge}. A simple combinatorial analysis will show that, in both cases, the effect of resampling an edge $e=\{u,v\}$ on the error of the approximation is in some way `localised' within the components $C_u$ and $C_v$  containing $u$ and $v$ of the \emph{remainder} (or \emph{mantle}), the graph $R$ remaining after we delete the giant component (or $k$-core, respectively). However, simple first moment calculations demonstrate that, for sufficiently large $c$, the component structure of $R$ resembles that of a \emph{subcritical} random graph, and so the expected effect of resampling an edge is `small'.

In the case of the giant component, this holds for any $c>1$, whilst for the $k$-core we require that $c$ is sufficiently large (i.e., larger than $\hc_k$). In fact, heuristically this should hold already for any $c$ above the $k$-core threshold $c_k$, although there does not seem to be a simple proof of this fact, and it is unclear if it follows directly from known results in the literature. However, we note that such a result, stating that the mantle has a subcritical distribution for a certain range of $p$, is all that would be required to extend the proof of Theorem \ref{t:core} to this range of $p$.
We note also that, whilst the statement and proof of Theorem \ref{t:core} extends without change to the case $k=2$, in fact similar calculations as that for the giant component show that the threshold for the appearance of a linear $2$-core coincides with that for the giant component, and that the `mantle' of the $2$-core will have a subcritical distribution for any $c>1$. It is then possible to show an analogue of Theorem \ref{t:core} for $k=2$ holds for any $c>1$ using similar methods, although we omit the details.

The case of the circumference, dealt with in \cite{AEKP25}, follows broadly the same strategy, but has to deal with significant technical difficulties which do not arise in the case of the giant component or $k$-core. These difficulties are mostly related to the method of `approximating' the vertex set of a longest cycle $C$, in itself not an obviously straightforward task. In particular, the fact that this approximation is not monotone under edge deletions, which serendipitously is the case for the applications in this paper, leads to sophisticated combinatorial analysis to determine the effect of resampling an edge. Furthermore, whilst it is relatively straightforward to estimate the variance of $|L|$ and $|V(K)|$, which is necessary to the application of Stein's method, the corresponding calculation for $|C|$ is far from trivial. Indeed, the resolutions of both of these problems take almost as long, if not longer, than the current paper.

Looking forward, we note that, in many cases, it can be shown that multiple parameters of interest simultaneously satisfy a \emph{multidimensional central limit theorem}. For example, Pittel and Wormald \cite{PW05,PW08} showed that the order of the giant component and the order of the $2$-core, when appropriately rescaled, are jointly Gaussian. It would be interesting to know if our methods could be extended to the multidimensional case, reducing then the derivation of a multidimensional central limit theorem to the combinatorial analysis of the simultaneous effect of resampling an edge on some collection of parameters.

\section{Preliminaries}
In this paper, we consider a \emph{component} to be a maximal \emph{set of vertices} that induces a connected subgraph. For a graph $G$ and a vertex set $A \subseteq V(G)$, we define $G[A]$ to be the \emph{induced subgraph of $G$ with respect to $A$}, that is, the graph $G[A]$ with vertex set $V(G[A]) = A$ and edge set $E(G[A]) = \{\{x,y\} \in E(G) \colon x, y \in A\}$. Moreover, we let $G - A \coloneqq G[V(G) \setminus A]$.
A \emph{rooted graph} is a pair $(H,r)$ where $H$ is a graph and $r \in V(H)$. The \emph{radius} of a rooted graph $(H,r)$ is the maximum distance of a vertex in $H$ to the root $r$ (in particular, if $H$ is disconnected, the radius of $(H,r)$ is $\infty$). Two rooted graphs $(H_1,r_1)$ and $(H_2,r_2)$ are \emph{isomorphic} (written $(H_1,r_1) \cong (H_2,r_2)$) if there exists a bijection $f \colon V(H_1) \rightarrow V(H_2)$ such that $f(r_1) = r_2$ and $f(u)f(v) \in E(H_2)$ if and only if $uv \in E(H_1)$ for all $u,v \in V(H_1)$.

The \emph{$k$-core} of a graph $G$ is defined to be the maximal subgraph $K\subseteq G$ of minimum degree at least $k$, which is easily seen to be unique. A useful alternative characterisation of the $k$-core is given by the following \emph{peeling process}. Starting with $G$, we recursively delete some vertex of degree less than $k$. Again it is easy to see that the resulting graph $K$ is the $k$-core of $G$. We note that, for any vertex $w$ which lies in a component $C_w$ of $G - V(K)$, there is some \emph{$k$-degenerate ordering} of $C_w$, that is, an ordering of the vertices of $C_w$ such that each vertex is adjacent to fewer than $k$ vertices which appear before it or in~$K$.

\subsection{Probabilistic inequalities}

We use the following lemma, proved in \cite[Lemma 3.6]{AEKP25}, which bounds the variance of `local' graph parameters using the Efron--Stein inequality \cite{ES81} by considering the effect of resampling an edge.

\begin{lemma}\label{lem:ESapp}
Let $G \sim G(n,p)$ be a random graph, let $\{\psi_G(v) \colon v \in V(G)\}$ be a set of (isomorphism-invariant) random variables and let $Z \coloneqq \sum_{v \in V(G)} \psi_G(v)$. Let $f \in \binom{V(G)}{2}$ be arbitrary and let $G^+ = G+f$ and $G^-=G-f$. Suppose that $M>0$ is such that $|\psi_G(v)| \leq M$ for each $v \in V(G)$ and let 
\[
    D \coloneqq \{ v \in V(G) \colon \psi_{G^+}(v) \neq \psi_{G^-}(v)\}.
\]
Then
\[
    \variance{Z} \leq 2M^2 p(1-p) n^2 \expec{|D|^2}.
\]
\end{lemma}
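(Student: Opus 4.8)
The plan is to apply the Efron--Stein inequality~\cite{ES81} with the underlying independent random variables being the edge indicators $(\omega_e)_{e \in \binom{V(G)}{2}}$ of $G \sim G(n,p)$, each a $\mathrm{Bernoulli}(p)$ variable. Write $Z = Z(\omega)$, let $\omega'$ be an independent copy of $\omega$, and for $e \in \binom{V(G)}{2}$ let $\omega^{(e)}$ be the configuration obtained from $\omega$ by replacing its $e$-th coordinate with $\omega'_e$; set $Z^{(e)} \coloneqq Z(\omega^{(e)})$. The Efron--Stein inequality then gives
\[
\variance{Z} \leq \frac{1}{2}\sum_{e \in \binom{V(G)}{2}} \expec{\brac{Z - Z^{(e)}}^2}.
\]

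The first step is to analyse the effect of resampling a single edge $e$. After conditioning on $(\omega_{e'})_{e' \neq e}$ the graph $H \coloneqq G - e$ is determined, and the (unordered) pair $\{G, G^{(e)}\}$ equals $\{H, H + e\}$ whenever $\omega_e \neq \omega'_e$, while $G = G^{(e)}$ otherwise; in the notation of the lemma (taking $f = e$) we have $\{H, H+e\} = \{G^-, G^+\}$. Hence $\brac{Z - Z^{(e)}}^2 = \indev{\omega_e \neq \omega'_e}\,\brac{Z(G^+) - Z(G^-)}^2$ identically. Since $\omega_e$ and $\omega'_e$ are independent of $H$, while $G^\pm$ --- and therefore $D$ and $Z(G^+) - Z(G^-)$ --- are functions of $H$ alone, and since $\prob{\omega_e \neq \omega'_e} = 2p(1-p)$, it follows that $\expec{\brac{Z - Z^{(e)}}^2} = 2p(1-p)\,\expec{\brac{Z(G^+) - Z(G^-)}^2}$.

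Next I would localise the difference $Z(G^+) - Z(G^-) = \sum_{v \in V(G)}\brac{\psi_{G^+}(v) - \psi_{G^-}(v)}$. By definition of $D$ the summand vanishes for $v \notin D$, and $|\psi_{G^+}(v) - \psi_{G^-}(v)| \leq 2M$, so $|Z(G^+) - Z(G^-)| \leq 2M|D|$ and therefore $\expec{\brac{Z(G^+) - Z(G^-)}^2} \leq 4M^2\expec{|D|^2}$. Finally, by the isomorphism-invariance of the family $(\psi_G(v))$ together with the fact that the vertex-permutation action on $G(n,p)$ is measure preserving and acts transitively on $\binom{V(G)}{2}$, the value of $\expec{|D|^2}$ does not depend on the choice of $e$ --- this is precisely why the lemma may fix an arbitrary $f$. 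Summing the previous bound over all $\binom{n}{2} \leq n^2/2$ edges $e$ then yields
\[
\variance{Z} \leq \frac{1}{2}\binom{n}{2}\cdot 8M^2 p(1-p)\expec{|D|^2} \leq 2M^2 p(1-p) n^2 \expec{|D|^2},
\]
as required.

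The step I expect to require the most care is the single-edge analysis: correctly matching which of $G, G^{(e)}$ equals $G^+$ and which equals $G^-$ after conditioning on the other coordinates, extracting the exact factor $2p(1-p)$ (rather than $p$ or $p(1-p)$) for the probability that the resampled coordinate changes, and cleanly justifying that this resampled coordinate is independent of $D$ and of $Z(G^+)-Z(G^-)$. The remaining ingredients --- the deterministic estimate $|Z(G^+)-Z(G^-)| \le 2M|D|$ and the symmetry argument reducing everything to a single fixed $f$ --- are routine.
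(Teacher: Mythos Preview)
Your argument is correct. The paper does not actually supply a proof of this lemma; it simply cites \cite{AEKP25}. Your derivation is the standard Efron--Stein computation one would expect there: resample a single edge indicator, extract the factor $2p(1-p)$ from $\prob{\omega_e \neq \omega'_e}$, bound $|Z(G^+)-Z(G^-)|\le 2M|D|$ pointwise, and use isomorphism-invariance to replace the sum over $\binom{n}{2}$ edges by $\binom{n}{2}$ times the term for a single fixed $f$. All steps are clean and the constants combine to give exactly $2M^2p(1-p)n^2\expec{|D|^2}$.
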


We will also use a probabilistic form of Minkowski's inequality, see for example \cite[Equation (19.4)]{B95}.
\begin{theorem}[Minkowski's inequality for variances] \label{thm:prob_Minkowski}
    For jointly distributed random variables $Y$ and $Z$ with $\expec{Y^2},\expec{Z^2},|\expec{Y\cdot Z}| < \infty$,
    \[
    \variance{Y+Z} \leq \left(\sqrt{\variance{Y}} + \sqrt{\variance{Z}}\right)^2.
    \]
\end{theorem}
Furthermore, under the same conditions, \cref{thm:prob_Minkowski} also implies\footnote{To prove \cref{e:prob_CS}, we may assume without loss of generality that $\variance{Z} \leq \variance{Y}$, then write $Y = Z + (Y - Z)$ and apply \cref{thm:prob_Minkowski}.}
    \begin{align}\label{e:prob_CS}
    \left| \variance{Y} - \variance{Z}\right| \leq  \variance{Y-Z} + 2\left(\variance{Y-Z}\right)^{\frac{1}{2}} \min \{ \variance{Y}^{\frac{1}{2}},\variance{Z}^{\frac{1}{2}}\}.
    \end{align}
When applying \cref{e:prob_CS}, we further upper bound the minimum by one of its terms, as convenient.
We note that equivalent statements can also be derived from a probabilistic version of the Cauchy-Schwarz inequality, as in \cite[Theorem 3.1]{AEKP25}.

In the proof we will consider sequences of approximations which are themselves defined on the sequence $(G(n,p))_{n\in \mathbb{N}}$ of random graphs. For this reason, when talking about limits, it will be useful to have a version of Slutsky's Theorem \cite{S26} for `double limits'. It can easily be checked that the proof carries over into this setting.

Given a doubly-indexed sequence $(a_{\ell,n})_{\ell,n \in \mathbb{N}}$ of real numbers and $a \in \mathbb{R}$ we write
\begin{equation}\label{e:doublelimit}
   \lim_{\substack{ \ell,n \to \infty \\  n \gg \ell}} a_{\ell,n} = a,
\end{equation}
if there exists a function $N \colon \mathbb{N} \rightarrow \mathbb{N}$ such that for every $\eps > 0$ there exists $\ell_0$ such that for all $(\ell,n) \in \mathbb{N}^2$ with $\ell \geq \ell_0$ and $n \geq N(\ell)$ we have $|a_{\ell,n} - a| < \eps$. Given a doubly-indexed sequence $(X_{\ell,n})_{\ell,n \in \mathbb{N}}$ of real random variables and a real random variable $X$, we write $X_{\ell,n} \indist X$ as $\ell,n \to \infty$ if, for every $x \in \mathbb{R}$ at which $x \mapsto \prob{X \leq x}$ is continuous, we have
\[
   \lim_{\substack{ \ell,n \to \infty \\  n \gg \ell}}  \prob{X_{\ell,n} \leq x} = \prob{X \leq x}.
\]
Similarly, we write $X_{\ell,n} \inprob X$  as $\ell,n \to \infty$ if for all $\delta >0$,
\[
   \lim_{\substack{ \ell,n \to \infty \\  n \gg \ell}}  \prob{|X_{\ell,n} - X| \geq \delta} = 0.
\]
\begin{theorem}[Slutsky's Theorem \cite{S26}] \label{thm:Slutsky}
    Let $(X_{\ell,n})_{\ell,n \in \mathbb{N}}, (Y_{\ell,n})_{\ell, n \in \mathbb{N}},$ and $(Z_{\ell,n})_{\ell,n \in \mathbb{N}} $ be doubly-indexed sequences of real random variables. Let $Y$ be a real random variable and let $x,z \in \mathbb{R}$. If $X_{\ell,n} \inprob x$, $Y_{\ell,n} \indist Y$ and $Z_{\ell,n} \inprob z$ as $\ell,n \to \infty$, then $$X_{\ell,n} + Y_{\ell,n}\cdot Z_{\ell,n}\quad\indist\quad x + zY, \qquad \text{ as $\ell,n \to \infty$}.$$
\end{theorem}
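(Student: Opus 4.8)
The plan is to follow the classical proof of Slutsky's Theorem essentially verbatim. Since in this formulation the threshold function $N$ in the definition \eqref{e:doublelimit} of the double limit must be fixed before the error parameter, a little care is needed with the order of quantifiers; but as every step below combines only finitely many of the given sequences, one may always take $N$ to be the pointwise maximum of the threshold functions supplied by the hypotheses and then, given the error parameter, split it into a bounded number of pieces and take the maximum of the finitely many resulting indices $\ell_0$. I will not belabour this bookkeeping, which is the only point really requiring attention.

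The workhorse is a double-limit version of the standard fact that an additive perturbation tending to $0$ in probability does not affect a limit in distribution: if $W_{\ell,n} \indist W$ and $A_{\ell,n}\inprob 0$ as $\ell,n\to\infty$, then $W_{\ell,n}+A_{\ell,n}\indist W$. I would prove this as in the classical case. Fix a continuity point $w$ of the distribution function $F_W$ of $W$ and an error parameter $\eps>0$, and choose continuity points $w_-<w<w_+$ of $F_W$ with $F_W(w_+)-F_W(w_-)<\eps$ (possible since $F_W$ is continuous at $w$ and its continuity points are dense) together with $\delta \coloneqq \min\{w-w_-,\,w_+-w\}>0$. Then
\[
\prob{W_{\ell,n}\le w_-}-\prob{|A_{\ell,n}|>\delta}\ \le\ \prob{W_{\ell,n}+A_{\ell,n}\le w}\ \le\ \prob{W_{\ell,n}\le w_+}+\prob{|A_{\ell,n}|>\delta},
\]
and letting $\ell,n\to\infty$ with $n\gg\ell$, using $\prob{|A_{\ell,n}|>\delta}\to 0$ and $W_{\ell,n}\indist W$, forces $\prob{W_{\ell,n}+A_{\ell,n}\le w}\to F_W(w)$ since $\eps$ was arbitrary.

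Next I would treat the product $Y_{\ell,n}Z_{\ell,n}$. Convergence in distribution yields tightness: for every $\eta>0$ there is $K>0$ with $\pm K$ continuity points of $F_Y$ and $F_Y(-K)+1-F_Y(K)<\eta/2$, and then $\prob{|Y_{\ell,n}|>K}<\eta$ for $\ell,n$ large with $n\gg\ell$. Combining this with $Z_{\ell,n}\inprob z$ through the union bound $\prob{|Y_{\ell,n}(Z_{\ell,n}-z)|>\delta}\le\prob{|Y_{\ell,n}|>K}+\prob{|Z_{\ell,n}-z|>\delta/K}$ shows $Y_{\ell,n}(Z_{\ell,n}-z)\inprob 0$. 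Also $zY_{\ell,n}\indist zY$: if $z=0$ both sides are the constant $0$, and if $z\neq 0$ it is immediate from $Y_{\ell,n}\indist Y$, since $w$ is a continuity point of the distribution function of $zY$ exactly when $w/z$ is one of $F_Y$, and the two distribution functions are related by the obvious change of variable. Writing $Y_{\ell,n}Z_{\ell,n}=zY_{\ell,n}+Y_{\ell,n}(Z_{\ell,n}-z)$ and applying the perturbation fact gives $Y_{\ell,n}Z_{\ell,n}\indist zY$.

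Finally I would write $X_{\ell,n}+Y_{\ell,n}Z_{\ell,n}=x+(X_{\ell,n}-x)+Y_{\ell,n}Z_{\ell,n}$. Shifting by the constant $x$ gives $x+Y_{\ell,n}Z_{\ell,n}\indist x+zY$ (again a trivial change of variable on distribution functions), and $X_{\ell,n}-x\inprob 0$ by hypothesis, so one last application of the perturbation fact yields $X_{\ell,n}+Y_{\ell,n}Z_{\ell,n}\indist x+zY$, as required. Thus no genuine obstacle arises; the content is entirely classical, and the paper's assertion that the proof carries over is accurate.
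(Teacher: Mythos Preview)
The paper does not actually prove this statement: it merely asserts that ``it can easily be checked that the proof carries over into this setting'' and cites the classical result. Your proposal supplies exactly such a verification, following the standard Slutsky argument (perturbation lemma, tightness from convergence in distribution, decomposition $Y_{\ell,n}Z_{\ell,n}=zY_{\ell,n}+Y_{\ell,n}(Z_{\ell,n}-z)$), and you correctly flag the one subtlety specific to the paper's double-limit convention---that the threshold function $N(\ell)$ is fixed before the error parameter $\eps$---and handle it by taking pointwise maxima over the finitely many threshold functions involved. This is correct and is precisely the approach the paper has in mind.
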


\subsection{A central limit theorem for weighted neighbourhood sums}\label{subsec:weighted neighbourhoods}
We will also need the following general result, proved in \cite[Lemmas 5.2 and 7.6]{AEKP25}, which asserts a central limit theorem for a range of graph parameters which have a representation as a weighted sum of neighbourhood counts.

More precisely, given $(\ell,n) \in \mathbb{N}^2$ let $\mathcal{H}$ be the set of all rooted graphs $(H,r)$ with radius
at most $\ell$ and at most $n$ vertices and let $\mathcal{T}$ be the set of rooted trees $(T,r) \in \mathcal{H}$ with $1 \leq v(T) \leq t(c,\ell)$, for some appropriately large function $t$ of $c$ and $\ell$. A \emph{weighted $\ell$-neighbourhood count} is a random variable of the form
\[
X = \sum_{v \in V(G)} \sum_{(H,r) \in \mathcal{H}} \alpha_{(H,r)} \mathds{1}_{\{(G[B_G(v,\ell)],v) \cong (H,r)\}}
\]
 where $\alpha_{(H,r)} \in [0,1]$ for each $(H,r) \in \mathcal{H}$ and 
 $B_G(v, \ell)$ is the set of vertices in $G$ at distance at most $\ell$ from $v$.
Moreover, we define the \emph{truncation} of $X$ to be
\[
Y=\sum_{v \in V(G)}\sum_{(T,r) \in \mathcal{T}} \alpha_{(T,r)} \mathds{1}_{\{(G[B_G(v,\ell)],v) \cong (T,r)\}}.
\]

\begin{lemma}\label{l:trunc}
Let $c>1$ and let $p=\frac{c}{n}$. If $X$ is a weighted $\ell$-neighbourhood count and $Y$ is the truncation of $X$, then the following hold.
\begin{enumerate}[label = \upshape{(\roman*)}]
\item\label{i:var} $\variance{X-Y} \leq f_1(n,\ell) \cdot n$, where $\displaystyle \lim_{\substack{ \ell,n \to \infty \\  n \gg \ell}} f_1(n,\ell) = 0$.
\item\label{i:CLT} If $\variance{Y} = \omega\left(n^{\frac{2}{3}}\right)$, then $Y$ satisfies a central limit theorem.
\end{enumerate}    
\end{lemma}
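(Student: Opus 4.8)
To prove \Cref{l:trunc} I would treat the two parts separately, using the Efron--Stein-type bound of \Cref{lem:ESapp} for part~\ref{i:var} and a Berry--Esseen bound from Stein's method for part~\ref{i:CLT}. For part~\ref{i:var}, note first that, since exactly one $(H,r)\in\mathcal{H}$ satisfies $(G[B_G(v,\ell)],v)\cong(H,r)$ for each $v$, we may write $X-Y=\sum_{v\in V(G)}\psi_G(v)$ with $\psi_G(v):=\alpha_{(G[B_G(v,\ell)],v)}\cdot\indev{(G[B_G(v,\ell)],v)\notin\cT}$, so that $|\psi_G(v)|\le 1$ and $\psi_G(v)$ is supported on the event that the $\ell$-ball of $v$ contains a cycle or is a tree on more than $t(c,\ell)$ vertices. (Here one should either include the single-vertex type in $\cT$ or exclude it from $\mathcal{H}$, as otherwise isolated vertices would contribute a spurious $\Theta(n)$ term to $\variance{X-Y}$.) Now fix an arbitrary $f=xy\in\binom{V(G)}{2}$, set $G^{\pm}=G\pm f$, and put $D:=\set{v\in V(G):\psi_{G^+}(v)\neq\psi_{G^-}(v)}$. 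The crux is the combinatorial observation that every $v\in D$ lies in $B_{G^+}(x,\ell)\cup B_{G^+}(y,\ell)$ and has $G^+[B_{G^+}(v,\ell)]$ not a tree on at most $t(c,\ell)$ vertices: if $v$ lies in neither ball then $(G^+[B_{G^+}(v,\ell)],v)=(G^-[B_{G^-}(v,\ell)],v)$, while if $G^+[B_{G^+}(v,\ell)]$ were such a tree then, since $B_{G^-}(v,\ell)\subseteq B_{G^+}(v,\ell)$ induces a connected subgraph of $G^-$, so too would be $G^-[B_{G^-}(v,\ell)]$, whence $\psi_{G^+}(v)=0=\psi_{G^-}(v)$.

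Granting this, every $v\in D\cap B_{G^+}(x,\ell)$ witnesses either a cycle, of length $O(\ell)$, within distance $2\ell$ of $x$ or the event $\set{|B_{G^+}(x,2\ell)|>t(c,\ell)}$, and trivially $|D\cap B_{G^+}(x,\ell)|\le|B_{G^+}(x,\ell)|$ (and symmetrically for $y$). A routine first-moment computation bounds the probability of the former event by $O_{\ell,c}(1/n)$, while choosing $t(c,\ell)$ sufficiently large --- which is all that is asked of $t$ --- makes the probability of the latter at most $\delta(\ell)$ for some $\delta(\ell)\to 0$. Since the ball sizes $|B_{G^+}(z,r)|$ in $G(n,\tfrac cn)$ have moments bounded uniformly in $n$ for fixed $r$ and $c$ (being dominated by a bounded number of generations of a Galton--Watson process with $\mathrm{Bin}(n,\tfrac cn)$ offspring), the probabilistic Cauchy--Schwarz inequality (\Cref{thm:prob_Cauchy_Schwarz}) then gives $\expec{|D|^2}=O_{\ell,c}\brac{n^{-1/2}+\delta(\ell)^{1/2}}$. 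Plugging this into \Cref{lem:ESapp} with $M=1$ yields $\variance{X-Y}\le 2p(1-p)n^2\expec{|D|^2}=O_{\ell,c}(1)\brac{n^{1/2}+n\,\delta(\ell)^{1/2}}$, which is of the required form $f_1(n,\ell)\cdot n$ with $\lim_{\substack{\ell,n\to\infty\\ n\gg\ell}}f_1(n,\ell)=0$.

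For part~\ref{i:CLT} I would write $Y=\sum_{v\in V(G)}\beta_G(v)$ with $\beta_G(v):=\sum_{(T,r)\in\cT}\alpha_{(T,r)}\ind{(G[B_G(v,\ell)],v)\cong(T,r)}\in[0,1]$. Since $\beta_G(v)=0$ unless $B_G(v,\ell)$ is a tree on at most $t(c,\ell)$ vertices, $\beta_G(v)$ is determined by the edges incident to $B_G(v,\ell)$; consequently $\beta_G(u)$ and $\beta_G(v)$ are independent once $\dist_G(u,v)>2\ell+2$, and the dependency neighbourhood of $v$ has size at most $|B_G(v,2\ell+2)|$, once again with uniformly bounded moments. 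A standard Berry--Esseen bound obtained via Stein's method for sums with bounded local dependence (as carried out in \cite{AEKP25}) then gives, writing $\sigma^2:=\variance{Y}$,
\[
\sup_{x\in\mathbb{R}}\left|\prob{\tfrac{Y-\expec{Y}}{\sigma}\le x}-\prob{\cN(0,1)\le x}\right|=O_{\ell,c}\brac{\frac{n}{\sigma^3}+\frac{\sqrt n}{\sigma^2}},
\]
and when $\sigma^2=\omega(n^{2/3})$ both terms tend to $0$, so $Y$ satisfies a central limit theorem; the exponent $\tfrac23$ is precisely what is needed to kill the leading term $n/\sigma^3$.

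I expect the combinatorial localisation of $D$ in part~\ref{i:var} to be the only genuinely non-routine idea. In part~\ref{i:CLT} the single technical subtlety is the familiar one of making `bounded local dependence' rigorous despite the $\ell$-balls being random, together with taming the `isolation' conditions implicit in the indicators $\ind{(G[B_G(v,\ell)],v)\cong(T,r)}$ --- which is where the assumption that $c$ is a fixed constant enters --- both of which are handled in \cite{AEKP25}.
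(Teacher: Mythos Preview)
The paper does not actually prove this lemma: its entire proof is the sentence ``For a proof see \cite[Lemmas 5.2 and 7.6]{AEKP25}.'' Your proposal is therefore strictly more than what the paper provides, and --- given the tools emphasised throughout (the Efron--Stein lemma~\ref{lem:ESapp} for variance bounds and Stein's method for the CLT) --- your outline is very likely a faithful reconstruction of the argument in \cite{AEKP25}.

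On the substance: your treatment of~\ref{i:var} is sound. The localisation $D\subseteq B_{G^+}(x,\ell)\cup B_{G^+}(y,\ell)$ together with the monotonicity observation that if $G^+[B_{G^+}(v,\ell)]\in\cT$ then also $G^-[B_{G^-}(v,\ell)]\in\cT$ is exactly the right combinatorial input, and the Cauchy--Schwarz/moment argument then goes through. Two small bookkeeping points worth flagging explicitly: (a) the hidden constants in your $O_{\ell,c}(\cdot)$ grow with $\ell$ (fourth moments of $|B_{G^+}(x,\ell)|$ are exponential in $\ell$), so when you assemble $f_1(n,\ell)$ you must choose $t(c,\ell)$ so that $\delta(\ell)$ decays fast enough to beat that growth --- which is fine, since $t$ is at your disposal; (b) your parenthetical about the single-vertex tree is well taken, but note that the only way $v$ can be isolated in $G^-$ and not in $G^+$ is $v\in\{x,y\}$, so this discrepancy contributes at most $2$ to $|D|$ and is harmless.

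For~\ref{i:CLT}, the statement ``$\beta_G(u)$ and $\beta_G(v)$ are independent once $\dist_G(u,v)>2\ell+2$'' is not literally a local-dependence hypothesis, since the dependency graph is itself random. The honest formulation --- which you gesture at in your final paragraph --- is to expand each $\beta_G(v)$ as a finite signed sum over rooted trees $(T,r)\in\cT$ (with $r=v$) of products of edge-indicators and non-edge-indicators on $E(V(T),V(G))$, so that $Y$ becomes a bounded-degree polynomial in the independent edge variables with summands supported on vertex sets of size $O_{c,\ell}(1)$; Stein's method for decomposable random variables (as in \cite{Barbour1989} or the version used in \cite{AEKP25}) then yields your Berry--Esseen bound $O_{\ell,c}(n/\sigma^3+\sqrt{n}/\sigma^2)$. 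With that reformulation the argument is correct, and your observation that $\sigma^2=\omega(n^{2/3})$ is precisely the threshold needed for $n/\sigma^3\to 0$ is exactly the point.
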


For a proof see \cite[Lemmas 5.2 and 7.6]{AEKP25}.  We note that this follows the classic approach to convergence via Stein's method, which originated in \cite{Barbour1989}, where it was applied to the problem of counting induced subgraphs and more generally to random variables which can be decomposed as a sum of \emph{semi-induced properties}, such as weighted neighbourhood counts, under some mild technical restrictions (see \cite[Theorem 5]{Barbour1989}). This version of Stein's method is by now a standard tool in the area, and the statement of \cref{l:trunc}, in particular part \ref{i:CLT}, should come as no surprise to experts in the field.

\subsection{Typical properties of the remainder and mantle}

We will require the following properties of a supercritical sparse random graph which follow from standard arguments, but for completeness we include a short proof. In this paper, $\log$ refers to the natural logarithm.

\begin{lemma}\label{l:basicprop}
Let $c >1$ and let $p=\frac{c}{n}$. Let $G \sim G(n,p)$, $e \coloneqq \{u,v\} \in \binom{V(G)}{2}$, $G^- \coloneqq G - e$, and $G^+ \coloneqq G + e$. 
Let $\cE$ be the event that for $* \in \{-,+\}$, we have that $G^*$ has a unique component $L^*$ of linear order and all other components have order at most $\log^4 n$. Then $\prob{\cE} = 1 - o\left(\frac{1}{n^2}\right)$.

Furthermore, let $R^- = G^- - L^-$ and for each $v \in V(R^-)$, we let $C_v^-$ be the component of $R^-$ containing $v$ and $C_v^- \coloneqq \varnothing$ for $v \in L^-$. Then for $W \coloneqq C_u^- \cup C_v^-$, we have
\[
\expec{|W|^2 \ind{|W| \geq \ell} \indev{\cE}} \leq f_2(n,\ell),
\]
where $\displaystyle \lim_{\substack{ \ell,n \to \infty \\  n \gg \ell}} f_2(n,\ell) = 0$.
\end{lemma}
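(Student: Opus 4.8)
The plan is to handle the two assertions of \Cref{l:basicprop} separately, both via standard first-moment arguments for sparse random graphs, being careful about the $o(1/n^2)$ error in the first and the uniformity in $\ell$ in the second.

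For the first assertion, I would first recall the classical result that for $c>1$ fixed, $G(n,p)$ with $p=c/n$ a.a.s.\ has a unique component of linear order, with all other components of order $O(\log n)$. To get the stronger probability bound $1-o(1/n^2)$ with the explicit threshold $\log^4 n$, I would run a union bound over the "bad" events. Concretely, the event that some component other than the giant has order at least $\log^4 n$ but at most, say, $n/2$ can be controlled: the probability that a fixed set of $s$ vertices spans a connected subgraph that is also "isolated" (no edges to the outside) is at most $s^{s-2} p^{s-1} (1-p)^{s(n-s)}$, and summing $\binom{n}{s}$ times this over $\log^4 n \le s \le n/2$ gives, using $\binom{n}{s} \le (en/s)^s$ and $(1-p)^{s(n-s)} \le e^{-cs/2}$ for $s \le n/2$, a bound that decays faster than any polynomial in $n$ once $s \ge \log^4 n$; the dominant term is around $s = \log^4 n$ and is of order $e^{-\Theta(\log^4 n)} = o(1/n^2)$. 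The possibility of two giant-order components is ruled out by the standard sprinkling / second-moment argument, which also gives exponentially small failure probability. Finally, since $G^+$ and $G^-$ differ from $G$ by a single edge, and adding or deleting one edge changes component orders in a controlled way (it can only merge two components or split one, affecting at most the two endpoints' components), the event $\cE$ for both $G^-$ and $G^+$ holds with the same $1-o(1/n^2)$ bound; a clean way is to note that $\cE$ holds for $G^-$ iff a suitable analogous event holds for $G$, up to the single edge $e$, and take a union bound over the two choices $* \in \{-,+\}$.

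For the second assertion, the key point is that on the event $\cE$, the remainder $R^- = G^- - L^-$ is (distributionally dominated by) a subcritical random graph: conditioning on the vertex set of the giant, the rest of $G^-$ looks like $G(n', c'/n')$ for some $c' < 1$ depending on $c$. More precisely, I would use the standard fact (the "discrete duality principle") that the graph induced on the complement of the giant component is contiguous to, or stochastically dominated by, a subcritical binomial random graph $G(m, \tilde c/n)$ with $\tilde c = \tilde c(c) < 1$; here one should be a little careful because we also condition on $e$, but since $e$ is a single edge this only perturbs things by a negligible amount and can be absorbed. Granting this, $|C_u^-|$ and $|C_v^-|$ are each stochastically dominated by the order of the component of a fixed vertex in a subcritical random graph, whose distribution has an exponential tail: $\prob{|C_u^-| \ge s} \le e^{-\beta s}$ for some $\beta = \beta(c) > 0$ and all $s \le \log^4 n$ (the truncation at $\log^4 n$ coming from $\cE$). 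Then $\expec{|W|^2 \ind{|W|\ge\ell}\indev{\cE}} \le 4\,\expec{|C_u^-|^2 \ind{|C_u^-| \ge \ell/2}} + 4\,\expec{|C_v^-|^2\ind{|C_v^-|\ge \ell/2}}$ by $(a+b)^2 \le 2a^2+2b^2$ and $\{|W|\ge\ell\}\subseteq\{|C_u^-|\ge\ell/2\}\cup\{|C_v^-|\ge\ell/2\}$, and each term is bounded by $\sum_{s \ge \ell/2} s^2 e^{-\beta s}$, which is a tail of a convergent series and hence tends to $0$ as $\ell \to \infty$, uniformly once $n$ is large enough relative to $\ell$. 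Setting $f_2(n,\ell)$ to be this tail bound (plus the $o(1/n^2)$ contribution from $\cE^c$, which is harmless) gives the claim with $\lim_{\ell,n\to\infty,\,n\gg\ell} f_2(n,\ell)=0$.

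The main obstacle, I expect, is making the subcriticality of the remainder rigorous with the right uniformity: one needs that the component containing a fixed vertex $u$ in $R^-$ has an exponential tail with a constant not depending on $n$, valid up to scale $\log^4 n$, and this must hold after conditioning on the event $\cE$ and on the single edge $e$. The cleanest route is probably to avoid conditioning on the exact giant and instead argue directly: if $C_u^-$ has order $s$ with $\log^4 n > s \ge \ell$, then there is a connected set $S \ni u$ of size $s$ in $G^-$ that sends no edges to any vertex outside $S$ lying in a component of order $> \log^4 n$ — but on $\cE$ all of $V \setminus L^-$ has small components, so in fact $S$ sends no edges to $L^-$; the probability of such an "isolated-from-the-giant" connected set, summed over $u$'s choices, is $\sum_s s^{s-2} p^{s-1}(1-p)^{s|L^-|}$ and since $|L^-| = \Theta(n)$ this is $\sum_s s^{s-2}(c/n)^{s-1} e^{-\Theta(s)} \cdot (\text{choices for }S)$, and the $\binom{n}{s-1}$ choices for the other vertices of $S$ are exactly cancelled by the $(c/n)^{s-1}$ and a factor from $s^{s-2}$ via Stirling, leaving a geometrically decaying sum in $s$. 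Tracking the constants here to confirm the base of the geometric series is $<1$ (which is where $c>1$, equivalently the survival probability being positive so that $|L^-|\ge \rho n$ with $\rho>0$, enters) is the one genuinely delicate computation; everything else is routine.
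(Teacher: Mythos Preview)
Your outline for the first assertion is fine and matches the paper, which simply defers to the standard textbook argument.

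For the second assertion you make things harder than necessary, and in the ``direct'' route you sketch at the end there is a real gap. You observe that $S=C_u^-$ ``sends no edges to $L^-$'' and then compute with the factor $(1-p)^{s|L^-|}$. Tracking the constants as you suggest, the summand becomes roughly $(ec\cdot e^{-c\rho})^s$, where $\rho=\rho(c)$ is the asymptotic density of the giant, and you need $ec\cdot e^{-c\rho}<1$, i.e.\ $c\rho>1+\log c$. This fails for $c$ close to $1$: for instance at $c=1.5$ one has $\rho\approx 0.58$, so $c\rho\approx 0.87<1+\log 1.5\approx 1.41$. So the ``genuinely delicate computation'' you flag is not merely delicate---it does not go through as stated.

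The fix is the observation you almost make but then discard: since $L^-$ is itself a component of $G^-$, the components of $R^-=G^--L^-$ are precisely the non-giant components of $G^-$. Hence $W=C_u^-\cup C_v^-$ has no edges in $G^-$ to \emph{any} vertex outside $W$, not just to $L^-$. This gives the factor $(1-p)^{j(n-j)}$, and the paper's one-line first-moment bound
\[
\expec{|W|^2 \ind{|W|\ge\ell}\indev{\cE}} \le \sum_{j=\ell}^{\log^4 n} j^2 \binom{n-2}{j-2} j^{j-2} p^{j-2}(1-p)^{j(n-j)} + o(1) \le \sum_{j=\ell}^{\log^4 n} j^2 (ce^{1-c+o(1)})^{j-2}+o(1),
\]
which decays since $ce^{1-c}<1$ for every $c\neq 1$. (Here $W$ contains both $u$ and $v$, is connected in $G^-+e$, hence contains a spanning tree using at least $j-2$ edges of $G^-$.) With this in hand the discrete-duality detour, the conditioning on the giant, and the splitting via $(a+b)^2\le 2a^2+2b^2$ are all unnecessary.
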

\begin{proof}
The first part of the lemma is a straightforward modification of a standard argument that appears in any textbook on random graphs (see, e.g., \cite{FK16}).

Furthermore, by the first part we can assume in the second part that all components of $R^-$ have order at most $\log^4 n$ and then calculate
\begin{align*}
\expec{|W|^2 \ind{|W| \geq \ell} \indev{\cE}} &\leq \sum_{j = \ell}^{\log^4 n} j^2 \binom{n-2}{j-2} j^{j-2} p^{j-2}(1-p)^{j(n-j)} + o(1) \\
&\leq \sum_{j = \ell}^{\log^4 n} j^2 e^2 \left( c e^{1-c + o(1)} \right)^{j-2} +o(1) := f_2(n,\ell)
\end{align*}
and the conclusion follows since $ce^{1-c} < 1$ for $c \neq 1$.
\end{proof}

We will also need similar statements about the properties of the mantle.
\begin{lemma}\label{lem:kcoreprop}
Let $k \geq 3$, let $c> \hc_k$, where $\hc_k$ is defined as in \eqref{e:hc_k}, and let $p = \frac{c}{n}$. Let $G \sim G(n,p)$, let $e \coloneqq \{u,v\} \in \binom{V(G)}{2}$, and let $G^- \coloneqq G - e$ and $G^+ \coloneqq G + e$. For each $* \in \{-,+\}$, let $K^*$ be the $k$-core of $G^*$ and let $R^* \coloneqq G^* - V(K^*)$.
Let $\cE$ be the event that for $* \in \{-,+\}$, all components of $R^*$ have order at most $\log^4 n$. Then the following hold.
\begin{enumerate}[label=\upshape{(\roman*)}]
    \item $\prob{\cE} = 1 - o\left(\frac{1}{n^2}\right)$. \label{i:kcp1}
    \item We have $\hc_k \geq c_k$ where $c_k$ is the $k$-core threshold. \label{i:kcp2}
    \item Furthermore, for each $w \in V(R^-)$, we let $C_w^-$ be the component of $R^-$ containing $w$ and $C_w^- \coloneqq \varnothing$ for $w \in K^-$. Then for $W \coloneqq C_u^- \cup C_v^-$, we have
    \[
        \expec{|W|^2 \ind{|W| \geq \ell} \indev{\cE}} \leq f_3(n,\ell),
    \]
    where $\displaystyle \lim_{\substack{ \ell,n \to \infty \\  n \gg \ell}} f_3(n,\ell) = 0$. \label{i:kcp3}
\end{enumerate}

\end{lemma}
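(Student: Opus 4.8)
The plan is to prove the three parts of \Cref{lem:kcoreprop} by combining a first-moment analysis of the peeling process with the threshold equation \eqref{e:hc_k}. The key object is the estimate on the expected number of vertices $w$ whose component $C_w^-$ in the mantle $R^-$ has a given order $j$: if a set $S$ of $j$ vertices forms a component of $R^-$, then $G^-[S]$ must be connected and must admit a $k$-degenerate ordering with respect to $K^-$, and moreover no vertex of $S$ sends an edge to $V(K^-)$ in a way that would have prevented it from being peeled. The crude but sufficient bound is to forget the interaction with the core and bound the probability that $S$ induces a connected graph in which every vertex has "few" back-neighbours; the relevant per-vertex quantity is $\prob{\mathrm{Bin}(n-1,p) \le k-1}$, which concentrates around $\prob{\mathrm{Poisson}(c) \le k-1}$, together with the $\binom{n}{j} j^{j-2}$-type factor counting spanning trees. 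The condition $c > \hc_k$ is exactly what makes the resulting geometric-type sum $\sum_j j^2 (\text{base})^{j}$ converge, because $\hc_k$ is defined so that the base equals $1$ at $c = \hc_k$ and is strictly less than $1$ above it; one should check monotonicity of the relevant expression in $c$ to turn \eqref{e:hc_k} into the strict inequality we need.

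Concretely, I would proceed as follows. First, part \ref{i:kcp1}: show that with probability $1 - o(n^{-2})$ every component of $R^*$ has order at most $\log^4 n$, for both $* \in \{-,+\}$. By a union bound over $e$ and over $* \in \{-,+\}$ it suffices to prove the single-graph statement with failure probability $o(n^{-4})$ (the two added/removed edges only perturb the core and mantle locally, so a statement for $G$ itself transfers, with room to spare, after a union bound over the at most $\binom{n}{2}$ choices of $e$). For the single-graph statement, bound the expected number of "bad" sets $S$ of size $j \ge \log^4 n$ that could be a mantle component, using the degeneracy characterisation from the Preliminaries: each such $S$ carries a spanning tree and a $k$-degenerate ordering, giving a bound of the shape $\binom{n}{j} j^{j-2} p^{j-1} \big(\prob{\mathrm{Bin}(n-1,p)\le k-1}\big)^{\Theta(j)}$, which for $j \ge \log^4 n$ and $c > \hc_k$ is $o(n^{-4})$ after summing. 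Second, part \ref{i:kcp2}: observe that for $c < c_k$ the $k$-core is empty whp, so a necessary condition for $\hc_k \ge c_k$ is just that the defining equation \eqref{e:hc_k} has no solution below $c_k$; alternatively, and more cleanly, note that $R^-$ having all components subcritical-sized (as just shown for $c > \hc_k$) forces the core to be non-empty and linear, which cannot happen below $c_k$, hence $\hc_k \ge c_k$. Third, part \ref{i:kcp3}: this is now the same computation as in \Cref{l:basicprop}, conditioned on $\cE$ so that we may restrict to $j \le \log^4 n$, giving
\[
\expec{|W|^2 \ind{|W| \ge \ell} \indev{\cE}} \le \sum_{j=\ell}^{2\log^4 n} j^2 \binom{n}{j} j^{j-2} p^{j-1} \big(\prob{\mathrm{Bin}(n-1,p)\le k-1}\big)^{\alpha j} + o(1) =: f_3(n,\ell),
\]
for a suitable constant $\alpha = \alpha(k) > 0$, and the tail in $\ell$ vanishes because the base of the geometric sum is bounded away from $1$ by the choice $c > \hc_k$ together with \eqref{e:hc_k}; note $|W| \le |C_u^-| + |C_v^-|$ so on $\cE$ we have $|W| \le 2\log^4 n$.

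The main obstacle is making the degeneracy-based union bound in part \ref{i:kcp1} honest: the events "$S$ is a component of $R^-$" for different $j$-sets $S$ are not independent of what happens inside the core, and the $k$-degenerate ordering of $C_w^-$ refers to adjacencies "before it or in $K$", so one must be slightly careful that the back-degree constraint is genuinely of the form $\mathrm{Bin}(\cdot, p) \le k-1$ per vertex and that the edges being counted are disjoint from the spanning-tree edges already accounted for. The standard fix is to reveal edges in the peeling order: when a vertex is peeled it has $< k$ neighbours among the not-yet-peeled vertices and the core, and these edge-slots are fresh, so one gets a clean product bound $p^{j-1}\cdot q^{j-1}$ with $q \coloneqq \prob{\mathrm{Bin}(n-j, p) \le k-1} = \prob{\mathrm{Poisson}(c)\le k-1} + o(1)$, and then \eqref{e:hc_k} says precisely $c\,q \cdot e = 1$ at $c = \hc_k$, i.e. $\binom{n}{j} j^{j-2} p^{j-1} q^{j-1} \approx (c q e^{1-o(1)})^j / \mathrm{poly}$, which is $(1-\Omega(1))^j$ for $c > \hc_k$. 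Once this bookkeeping is set up, parts \ref{i:kcp2} and \ref{i:kcp3} are immediate consequences, and $f_3$ is defined by the displayed sum exactly as $f_2$ was in \Cref{l:basicprop}.
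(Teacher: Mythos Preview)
Your overall strategy and your treatments of \ref{i:kcp2} and \ref{i:kcp3} match the paper's; the monotonicity check you flag is exactly what the paper carries out (showing $x\mapsto x\,\mathbb{P}[\mathrm{Poisson}(x)\le k-1]$ is strictly decreasing for $x>k$, so that for $c>\hc_k$ one has $\mathbb{P}[\mathrm{Bin}(n-j,p)\le k-1]=(\alpha e c - o(1))^{-1}$ for some constant $\alpha>1$, uniformly over small $j$).

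There is, however, a real gap in your argument for \ref{i:kcp1}. You propose to sum the bound $\binom{n}{j} j^{j-2} p^{j-1} q^{\Theta(j)}$ directly over all $j \ge \log^4 n$, taking $q=\mathbb{P}[\mathrm{Bin}(n-j,p)\le k-1]$ and invoking the Poisson approximation $q\approx\mathbb{P}[\mathrm{Poisson}(c)\le k-1] < (ec)^{-1}$. But that approximation is only valid for $j=o(n)$; once $j=\Theta(n)$ the mean of $\mathrm{Bin}(n-j,p)$ drops and $q$ tends to $1$, so the summand behaves like $\tfrac{n}{cj^{2}}(ec)^{j}$, which diverges. (Writing $\mathrm{Bin}(n-1,p)$ instead, as in your first pass, goes the wrong way: it \emph{under}estimates the per-vertex probability and is not a valid upper bound.) The paper sidesteps this with an intermediate-stage trick that you are missing: first, with probability $1-o(n^{-2})$ the maximum degree of $G^{*}$ is at most $\log^{2} n$; hence during the peeling process a component of already-peeled vertices can grow by at most a factor of $\log^{2} n$ when a single vertex is peeled. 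So if the final mantle has a component of order at least $\log^{4} n$, some \emph{intermediate} peeled component $C$ has order $j\in[\log^{2} n,\log^{4} n]$. Being a component of the peeled set, every vertex of $C$ has at most $k-1$ neighbours in $V(G)\setminus C$, and now the union bound runs only over $j\le\log^{4} n$, where $q=(\alpha ec - o(1))^{-1}$ holds uniformly and the sum is $o(n^{-2})$. Your argument needs precisely this restriction on the range of $j$.

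Two minor remarks. The pair $e$ is fixed in the lemma, so no union bound over $\binom{n}{2}$ choices of $e$ is needed; the paper simply argues for $G^{+}$ and notes $G^{-}$ is identical. And in \ref{i:kcp3} the exponent on $q$ is exactly $j$, not $\alpha j$ for some unspecified $\alpha$: it comes from the constraint that each of the $j$ vertices of $W$ has at most $k-1$ neighbours outside $W$.
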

\begin{proof}
We first note that the function $f(x)= x \cdot \mathbb{P}(\mathrm{Poisson}(x)\leq k-1)$ is strictly decreasing for $x > k$. Indeed, for $y >x > k$,  as $f(y)=\sum_{i=1}^{k}e^{-y}y^i/(i-1)! \leq e^{-(y-x)}(y/x)^k f(x)$ we have that
\begin{align*}
    \frac{f(y)}{f(x)}\leq  e^{-(y-x)}(y/x)^k =e^{-(y-x)}(1+(y-x)/x)^k \leq  e^{-(y-x)+k(y-x)/x}=e^{-(y-x)(1-k/x)}<1.
\end{align*}
By Poisson convergence (see, e.g.,~\cite[Theorem 3.6.1]{D19}) and the above, we have that for this choice of $c$,
there exists a constant $\alpha > 1$ such that for any $j \leq \log^4 n$,
\[
\mathbb{P}\Big(\mathrm{Bin}(n-j,p) \leq k-1\Big) 
= (1+o(1))\mathbb{P}\Big(\mathrm{Poisson}(c) \leq k-1\Big) = (\alpha e c -o(1))^{-1}.
\]
The proof of \cref{i:kcp1} is similar to the argument appearing in \cite[Lemma 7 (b)]{A23} or \cite[Lemma 4.15]{AEKP25}. 
We show the statement for $G^+$, and the statement for $G^-$ follows similarly.
We first note that with probability $1- o\left(\frac{1}{n^2}\right)$, the maximum degree of $G^+$ is at most $\log^2 n$ in this regime of $p$. Consider then the $k$-core peeling process on $G^+$. Since the order of the largest component in the remainder can increase by a multiplicative factor of at most $\log^2 n$ each time a vertex is `peeled', if there is a component of $R^+ \coloneqq G^+ - V(K^+)$ of order at least $\log^4 n$ at the end of this process, there is some intermediary stage at which there is a component $C$ of peeled vertices of order between $\log^2 n$ and $\log^4 n$.
However, by construction there is a $k$-degenerate ordering of the vertices of $C$, and hence
each vertex in $C$ has at most $k-1$ neighbours in 
$V(G) \setminus C$. A simple union bound then shows that the probability that this occurs is at most
\begin{align*}
\sum_{j=\log^2 n}^{\log^4 n} \binom{n}{j} j^{j-2} p^{j-2} \left(\mathbb{P}\Big(\mathrm{Bin}(n-j,p) \leq k-1\Big)\right)^j &\leq \sum_{j=\log^2 n}^{\log^4 n} \frac{n^{2}}{j^2} (ec)^{j-2}  (\alpha ec -o(1))^{-j} \\
&= o(n^{-2}).
\end{align*}
To prove \cref{i:kcp2}, suppose for a contradiction that $\hc_k < c_k$ and let $c$ be such that $\hc_k < c < c_k$. Then, since $c < c_k$ by results in~\cite{PSW96}, we have that $K = \varnothing$ with probability $1-o(1)$. Together with \cref{i:kcp1}, this implies that with probability $1- o(1)$, we have that $G$ consists only of components of order at most $\log^4 n$. However, since $c > \hc_k > k > 1$, this contradicts \cref{l:basicprop}, which states that $G$ has a component of linear order with probability $1 - o\left(\frac{1}{n^2}\right)$.

Using \cref{i:kcp1} we can calculate similarly for \cref{i:kcp3},
\begin{align*}
    \expec{|W|^2 \ind{|W| \geq \ell} \indev{\cE}}  &\leq \sum_{j=\ell}^{\log^4 n} j^2 \binom{n-2}{j-2} j^{j-2}p^{j-2} \left(\mathbb{P}\Big(\mathrm{Bin}(n-j,p) \leq k-1\Big) \right)^j \\
 &\leq \sum_{j=\ell}^{\log^4 n}  j^2 e^2 \left( ec \right)^{j-2} (\alpha ec -o(1))^{-j} \coloneqq f_3(n,\ell),
 \end{align*}
where it is then clear that $\displaystyle \lim_{\substack{ \ell,n \to \infty \\  n \gg \ell}} f_3(n,\ell) = 0$.
\end{proof}

As mentioned in the introduction, one should expect a qualitatively similar statement to hold for any $p$ above the $k$-core threshold. Indeed, the work of \cite{COCKS17} suggests that the components of the mantle should resemble \emph{subcritical} (multi-type) Galton-Watson trees, although we were not able to extract an explicit statement that would imply \cref{lem:kcoreprop} from their work.

\subsection{Variance of the order of the giant and $k$-core}
For our arguments, it will be important that the variance of the order of the giant component (or $k$-core, respectively) is linear in $n$, which can be shown by elementary considerations. We include a proof for completeness.
\begin{lemma}\label{lem:variance}
Let $k \geq 3$ and let $\hc_k$ be as defined in \eqref{e:hc_k}. Let $p = \frac{c}{n}$, let $L$ be the largest component of $G(n,p)$, and let $K$ be the $k$-core of $G(n,p)$. 
\begin{enumerate}[label = \upshape{(V\arabic*)}]
    \item If $c > 1$, then $\variance{|L|} = \Omega(n)$.
    \item If $c > \hc_k$,
    then $\variance{|V(K)|} = \Omega(n)$. 
\end{enumerate}
\end{lemma}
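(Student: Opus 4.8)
The plan is to use a standard two-round exposure / Chatterjee-type martingale variance lower bound: expose the edges incident to a carefully chosen vertex last, and show that conditioning on these edges moves the conditional expectation of the parameter by a constant amount with constant probability, independently across a linear number of disjoint such ``gadgets''. Concretely, I would first fix a small constant $m$ and partition (most of) the vertex set into $\Theta(n)$ disjoint blocks $B_1, \dots, B_s$, each of size $m$; let $\mathcal{F}_i$ be the $\sigma$-algebra generated by all edges except those inside $B_i$ (or, more convenient for \cite{ES81}-style arguments, use the Efron--Stein / bounded-differences martingale associated to independently resampling the edge-set within each block). By the Efron--Stein inequality in the direction that gives a \emph{lower} bound (or equivalently the standard fact $\variance{Z} \geq \sum_i \expec{\condvariance{Z}{\mathcal{F}_i}}$ for a martingale-difference decomposition indexed by independent blocks of coordinates), it suffices to show that for a constant fraction of the blocks, $\expec{\condvariance{|L|}{\mathcal{F}_i}} = \Omega(1)$, and likewise for $|V(K)|$.

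For the giant component (V1): condition on the whole graph outside $B_i$; with probability $\Omega(1)$ the ``outside'' graph already has its unique linear-order component $L'$ and, moreover, at least one vertex $w \in B_i$ has a neighbour in $L'$ while the subgraph induced on $B_i$ is still ``flexible''. Now resampling the edges inside $B_i$: with constant probability $B_i$ becomes a connected clique-like gadget all of whose vertices then join $L'$, versus with constant probability $B_i$ splits off from $L'$ apart from $w$; these two outcomes differ in $|L|$ by $\Omega(m) = \Omega(1)$, so $\condvariance{|L|}{\mathcal{F}_i} = \Omega(1)$. Summing over the $\Theta(n)$ blocks gives $\variance{|L|} = \Omega(n)$. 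One has to be slightly careful that the events used are measurable with respect to the conditioning and that the $o(1/n^2)$ error event $\cE^c$ from \cref{l:basicprop} only costs a negligible amount; this is routine.

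For the $k$-core (V2): the argument is the same in spirit but the gadget must be engineered so that toggling the internal edges of $B_i$ toggles whether a small cluster near $B_i$ survives the peeling process. Here I would instead use a single random edge $f = \{u,v\}$ with $u,v$ chosen so that, conditioned on the rest of the graph (an event of probability $\Omega(1)$), the vertices $u,v$ each have exactly $k-1$ neighbours in the current $k$-core together with a small pendant tree attached: adding $f$ pushes an $\Omega(1)$-sized chunk into the core, while its absence leaves that chunk in the mantle — using \cref{lem:kcoreprop} (and \cref{i:kcp2}, which guarantees we are genuinely above the core threshold so that a linear core exists to attach to) to control the mantle component sizes and thus bound the error terms. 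This yields $\condvariance{|V(K)|}{\mathcal{F}} = \Omega(1)$ for a linear number of disjoint choices of the toggled edge, hence $\variance{|V(K)|} = \Omega(n)$.

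The main obstacle I expect is not the variance bookkeeping but verifying that the local ``toggle'' gadget genuinely changes the \emph{global} parameter by $\Omega(1)$ with $\Omega(1)$ probability \emph{after} conditioning on the rest of the graph — i.e., showing that the relevant configuration of the conditioned graph (a linear giant / $k$-core of the expected shape, with the right number of ``almost-attached'' vertices near the gadget) occurs with constant probability. For the giant this follows from \cref{l:basicprop} plus an easy estimate that a given vertex has a neighbour in $L$ with probability bounded away from $0$ and $1$; for the $k$-core one needs the corresponding statement that a uniformly random pair of ``mantle-adjacent'' vertices with the critical degree configuration exists in linear quantity, which I would extract from the first-moment computations already used in the proof of \cref{lem:kcoreprop} together with \cref{i:kcp1}.
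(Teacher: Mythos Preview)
Your plan has a genuine gap in both parts, stemming from the same issue: the ``blocks'' you resample carry too little randomness at density $p = c/n$, and the lower-bound inequality you invoke does not exist in the form you need.

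First, the Efron--Stein inequality is an \emph{upper} bound on the variance; there is no general inequality of the form $\variance{Z} \geq \sum_i \expec{\condvariance{Z}{\mathcal{F}_i}}$ when $\mathcal{F}_i$ is the $\sigma$-algebra generated by all coordinates except block $i$. What \emph{is} always true is the single-$\sigma$-algebra bound $\variance{Z} \geq \expec{\condvariance{Z}{\mathcal{G}}}$ from the law of total variance, or the Hoeffding-decomposition bound $\variance{Z} \geq \sum_i \variance{\condexpec{Z}{X_i}}$ (conditioning on \emph{only} one block), neither of which matches what you wrote. Second, and more fatally, for a block $B_i$ of constant size $m$ the $\binom{m}{2}$ internal edges are each present with probability $c/n$, so with probability $1 - O(1/n)$ the block is empty and resampling changes nothing; the event ``$B_i$ becomes a connected clique-like gadget'' has probability $O(n^{-(m-1)})$, not $\Omega(1)$. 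Hence $\expec{\condvariance{|L|}{\mathcal{F}_i}} = O(1/n)$, and even granting your summation you would get $O(1)$, not $\Omega(n)$. The same arithmetic kills your (V2) gadget: toggling a single edge $f$ contributes conditional variance $O(p) = O(1/n)$, and a ``linear number of disjoint choices'' again sums to $O(1)$.

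The paper's proof sidesteps both issues via a sprinkling argument that uses only the single-$\sigma$-algebra law of total variance. One writes $G = G_1 \cup G_2$ with $G_1 \sim G(n,p_1)$, $G_2 \sim G(n,p_2)$ independent, $p_1 = c'/n$ for some $c' \in (\hc_k, c)$, and conditions on everything except the $G_2$-edges between the set $A_2$ of (suitably defined) isolated vertices and the $k$-core $K_1$ of $G_1$. Under this conditioning $|V(K)| = |V(K_2)| + Z$ where $|V(K_2)|$ is fixed and $Z \sim \mathrm{Bin}(|A_2|, q)$ counts vertices of $A_2$ with at least $k$ sprinkled neighbours in $K_1$; since whp $|A_2| = \Omega(n)$ and $q$ is bounded away from $0$ and $1$, one gets $\condvariance{Z}{\cdot} = \Omega(n)$ directly from a single conditioning. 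The key difference from your plan is that the randomness left unrevealed comprises $\Theta(n)$ Bernoulli trials (not $O(1)$), so a single application of the law of total variance suffices.
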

\begin{proof}
We just prove the second statement, the first can be proved with an analogous argument.

We utilise a sprinkling argument, taking $p_1$ and $p_2$ such that $(1-p_1)(1-p_2) = 1-\frac{c}{n}$, where $p_1 = \frac{c'}{n}$, for some $c'$ such that $c > c' > \hc_k$ thus $p_2=\Theta\left(\frac{1}{n}\right)$. Let $G_1 \sim G(n,p_1)$ and $G_2 \sim G(n,p_2)$ be independent random graphs and let $G = G_1 \cup G_2$ so that $G \sim G(n,p)$. By \cref{lem:kcoreprop} \cref{i:kcp2}, we have that $\hc_k$ is above the $k$-core threshold $c_k$. Thus standard results on the $k$-core threshold (see, e.g., \cite{PSW96}) imply that whp\footnote{For a sequence of events $\mathcal{E}_n$, we say that $\mathcal{E}_n$ occurs \emph{with high probability} (whp) if $\prob{\mathcal{E}_n} \rightarrow 1$ as $n \rightarrow \infty$.} $G_1$ contains a $k$-core of linear order. Let us write $K_1$ for the $k$-core of $G_1$, where whp $|V(K_1)| = \Omega(n)$.

Let $A_1$ be the set of isolated vertices in $G_1$ and let $A_2 \subseteq A_1$ be the set of vertices which are isolated in $G - E_{G_2}(A_1,V(K_1))$. Then standard calculations show that whp $|A_2| = \Omega(n)$.

We let $K_2$ be the $k$-core in $G - E_{G_2}(A_{2},V(K_1))$. Then $K_1 \subseteq K_2 \subseteq K$ and we note that $|V(K)| = |V(K_2)| + Z$, where $Z$ is the number of vertices in $A_2$ adjacent to at least $k$ vertices of $K_2$ in $G_2$. Furthermore,  conditioned on $G_1$ and $G -E_{G_2}(A_{2}, V(K_1))$, noting that this determines $A_1,A_2,K_1$ and $K_2$, we have that $Z \sim \mathrm{Bin}(|A_2|,q)$ where $q = \prob{\mathrm{Bin}(|V(K_1)|,p_2) \geq k}$ thus, $\variance{Z} = q(1-q)|A_2|$.

Let $\mathcal{E}^*$ be the whp event that $|V(K_1)|,|A_2| = \Omega(n)$, which also implies that $q = \Omega(1)$, then a standard calculation (see, e.g., \cite[Lemma 6.4]{AEKP25}) shows that
\[
\variance{|V(K)|} \geq (1-o(1))\condvariance{Z}{\mathcal{E}^*} = \Omega(n),
\]
where the implicit constant depends on $c$ and $k$.
\end{proof}

\section{A central limit theorem for the order of the giant component}

Let $G \sim G(n,p)$ where $p = \frac{c}{n}$ and $c>1$. Let us define a function $\phi:V(G) \to \{0,1\}$ by
\[
\phi(v) \coloneqq \begin{cases} 1 &\text{ if } v \in V(L)\\ 0 &\text{ otherwise.} \end{cases}
\]
Note that $Z \coloneqq |L| = \sum_{v\in V(G)} \phi(v)$. For $\ell \in \mathbb{N}$, we also define a `localised' version of $\phi$ by
\[
\phi_{\ell}(v) \coloneqq \begin{cases} 1 &\text{ if } \partial_G(v,\ell) \neq \varnothing \text{ or } |B_G(v,\ell)|>\log^4 n 
\\ 0 &\text{ otherwise,} \end{cases}
\]
where $\partial_G(v,\ell)$ is the set of vertices at distance $\ell$ from $v$. Moreover, let 
$\tZ_\ell \coloneqq \sum_{v\in V(G)} \phi_\ell(v).$ 
Finally, viewing $\tZ_\ell$ as a weighted neighbourhood count with weights in $\{0,1\}$, let $\hZ_\ell$ be the truncation of $\tZ_\ell$ (as defined in \Cref{subsec:weighted neighbourhoods}).
\begin{lemma}\label{l:variancediff giant}
$\variance{Z - \tZ_\ell} \leq n \cdot f_4(n,\ell)$, where $\displaystyle \lim_{\substack{ \ell,n \to \infty \\  n \gg \ell}} f_4(n,\ell) = 0$.
\end{lemma}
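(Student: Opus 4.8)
The plan is to apply the Efron--Stein bound of \Cref{lem:ESapp} to the difference $Z - \tZ_\ell = \sum_{v \in V(G)} \psi_G(v)$, where $\psi_G(v) \coloneqq \phi(v) - \phi_\ell(v)$. Note $|\psi_G(v)| \leq 1$, so we may take $M = 1$. The heart of the matter is to understand, for a fixed pair $f = \{u,v\}$, the set $D = \{w \in V(G) : \psi_{G^+}(w) \neq \psi_{G^-}(w)\}$, and to show $\expec{|D|^2}$ is small; then \Cref{lem:ESapp} gives $\variance{Z-\tZ_\ell} \leq 2p(1-p)n^2 \expec{|D|^2}$, which we want to be $n \cdot f_4(n,\ell)$ with $f_4 \to 0$.

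The key combinatorial step is to argue that, on the likely event $\cE$ of \Cref{l:basicprop}, $D$ is contained in $C_u^- \cup C_v^-$, i.e.\ in the union of the components of the remainder $R^- = G^- - L^-$ containing $u$ and $v$ (together with the vertices $u,v$ themselves). Indeed, adding or removing the single edge $f$ changes neither the giant nor the local indicator at a vertex $w$ far from both $u$ and $v$: for $\phi$, since on $\cE$ both $G^+$ and $G^-$ have a unique linear-order component and the symmetric difference of these (as vertex sets) is confined to the components of $R^-$ meeting $\{u,v\}$; for $\phi_\ell$, since $\phi_\ell(w)$ depends only on $B_G(w,\ell)$, which is unaffected by $f$ unless $u$ or $v$ lies within distance $\ell$ of $w$ --- and a short argument shows that if $w$ lies outside $C_u^- \cup C_v^-$ but close to $\{u,v\}$ then $\phi_\ell(w)=1$ already (because $w$'s $\ell$-ball is large or reaches distance $\ell$), so the value does not change. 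Thus $D \subseteq C_u^- \cup C_v^- \eqqcolon W$ on $\cE$, and trivially $|D| \leq n$ always.

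Splitting according to whether $|W| < \ell$ or $|W| \geq \ell$, and whether $\cE$ holds, we get
\[
\expec{|D|^2} \leq \expec{|W|^2 \ind{|W| < \ell}\indev{\cE}} + \expec{|W|^2 \ind{|W| \geq \ell}\indev{\cE}} + n^2 \prob{\overline{\cE}} \leq 4\ell^2 + f_2(n,\ell) + o(1),
\]
using $|W| \leq 2|C_u^-| \le 2\ell$ in the first term (actually $|W| < 2\ell$), \Cref{l:basicprop} for the middle term, and $\prob{\overline{\cE}} = o(n^{-2})$ for the last. Hence $\variance{Z - \tZ_\ell} \leq 2p n^2 (4\ell^2 + f_2(n,\ell) + o(1))$. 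Since $p = c/n$, this is at most $n \cdot f_4(n,\ell)$ with $f_4(n,\ell) = 2c(4\ell^2/n + f_2(n,\ell) + o(1))$; the term $8c\ell^2/n$ tends to $0$ because $n \gg \ell$ (so $\ell^2/n \to 0$ along the relevant double limit), $f_2(n,\ell) \to 0$ by \Cref{l:basicprop}, and the $o(1)$ error from $\prob{\overline{\cE}}$ is handled since it is multiplied only by $n^2 \cdot p = cn$.

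I expect the main obstacle to be the careful verification that $D \subseteq W$ on $\cE$ --- in particular, ruling out that $\phi_\ell(w)$ changes for some $w \notin W$ that happens to be within distance $\ell$ of $u$ or $v$. The resolution is the observation that such a $w$ lies in a component of $R^-$ of order $\geq \ell$ (if $w$ is joined to $u$ or $v$ through $R^-$) or is itself in the giant, and in either case both $\phi_\ell(w) = 1$ in $G^+$ and in $G^-$, since adding/removing one edge cannot destroy the property ``$B_G(w,\ell)$ reaches distance $\ell$ or has more than $\log^4 n$ vertices'' once one is on $\cE$ and has controlled the relevant local structure; one must be slightly careful that removing $f$ does not disconnect $w$'s ball, but the size/reach threshold is robust enough to absorb this. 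Everything else is a routine first-moment computation already packaged in \Cref{l:basicprop}.
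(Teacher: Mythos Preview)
Your approach is the same as the paper's --- apply \Cref{lem:ESapp} with $\psi_G = \phi - \phi_\ell$, show $D \subseteq W$ on $\cE$, and invoke \Cref{l:basicprop} --- but you miss one simplification and your containment argument is unnecessarily tangled.

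The paper observes that on $\cE$, if $|W| < \ell$ then in fact $D = \varnothing$: for $w \in W$ we have $C_w^\pm \subseteq W$, so $|C_w^\pm| < \ell$, hence $\partial_{G^\pm}(w,\ell) = \varnothing$ and $|B_{G^\pm}(w,\ell)| < \ell \leq \log^4 n$, giving $\phi^\pm(w) = \phi_\ell^\pm(w) = 0$. This kills your $\ell^2$ term entirely and yields the cleaner bound $\expec{|D|^2\indev{\cE}} \leq \expec{|W|^2\ind{|W|\geq \ell}\indev{\cE}} \leq f_2(n,\ell)$, so $f_4 = 2c f_2 + o(1)$ with no $\ell^2/n$ contribution. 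Your version still works (the double limit absorbs $\ell^2/n$), but the extra term is avoidable.

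For $D \subseteq W$, your ``close to $\{u,v\}$ but outside $W$'' case analysis and the worry about disconnecting balls are overcomplicated. The paper's argument is simpler: if $w \notin W$ then $e$ misses the component $C_w^-$ of $R^-$, so $G^-[C_w^-] = G^+[C_w^+]$; now either $C_w^* \neq \varnothing$, in which case $B_{G^*}(w,\ell) \subseteq C_w^*$ (the ball cannot leave its own connected component of $G^*$), so $\phi_\ell^*(w)$ is determined by $G^*[C_w^*]$; or $C_w^* = \varnothing$, meaning $w \in L^*$, whence $\phi_\ell^*(w) = 1$ automatically. Either way $\phi_\ell^+(w) = \phi_\ell^-(w)$, and $\phi^+(w) = \phi^-(w)$ follows from $L^- \subseteq L^+$ and $L^+ \setminus L^- \subseteq W$. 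No separate ``$w$ near $u,v$'' case is needed.
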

\begin{proof}
By the definition of the double limit, we may assume that $\ell \leq \log n$.
Let us consider the effect of resampling an edge $e \coloneqq \{u,v\} \in \binom{V(G)}{2}$ and define as before $G^+ = G+e$ and $G^- = G-e$ and 
\begin{align} \label{eq:def_D}
    D \coloneqq \{w \in V(G) \colon \phi^+(w)-\phi_\ell^+(w) \neq \phi^-(w) - \phi_\ell^-(w)\}.
\end{align}
By Lemma \ref{lem:ESapp}, $\variance{Z - \tZ_\ell} \leq 2c n \expec{|D|^2}$. 
By \cref{l:basicprop}, we have
\begin{align} \label{e:var_bound}
    \variance{Z - \tZ_\ell} \leq 2c n \expec{|D|^2} \leq 2c n \expec{|D|^2 \indev{\cE}} + o(n),
\end{align}
where $\cE$ is the event as defined in \cref{l:basicprop}.

For $v \in V(G)$ and $*\in \{+,-\}$, let us define $C^*_v$ as in \Cref{l:basicprop} to be the component in $G^* - L^*$ containing $v$ (and $C^*_v=\varnothing$ if $v \in L^*$). Let $W \coloneqq C^-_u \cup C^-_v$. 
We note that in the event $\mathcal{E}$, each of $G^+,G^-$ has a unique linear order component $L^-$ and $L^+$ respectively. Moreover, since $G^-\subseteq G^+$, we have that $L^-\subseteq L^+$.

\begin{claim}\label{claim:D-W containment giant}
    In the event $\mathcal{E}$, we have that $D \subseteq W$.
\end{claim}
\begin{proofclaim}
Assume that the event $\mathcal{E}$ occurs. Note that if $w\notin W$ then $e$ does not meet the component of $w$ in $G^- - L^-$, that is, $C_w^- \cap \{u,v\} = \varnothing$. It follows that $G^-[C_w^-]=G^+[C_w^+]$.
This, and $L^-\subseteq L^+$ imply that $\phi^+(w)=\phi^-(w)$. 
Next we will show that $\phi^*_\ell(w)$ is completely determined by $G^*[C_w^*]$ for each $* \in \{-,+\}$. 
Indeed if $C_w^* \neq \varnothing$, then $\partial_{G^*}(w, \ell) = \partial_{G^*[C_w^*]}(w, \ell)$ and $B_{G^*}(w, \ell) = B_{G^*[C_w^*]}(w, \ell)$. On the other hand, if $C_w^*= \varnothing$, then $w \in L^*$ and thus we have $\partial_{G^*}(w, \ell) \neq \varnothing$ or $|B_{G^*}(w, \ell)| \geq |L^*| > \log^4 n$ implying that $\phi_\ell^*(w) = 1$.
Hence $G^-[C_w^-]=G^+[C_w^+]$ also implies $\phi^+_\ell(w)=\phi^-_\ell(w)$. Thus $\phi^+(w)-\phi^+_\ell(w)=\phi^-(w)-\phi^-_\ell(w)$, implying $w\notin D$. 
\end{proofclaim}

\begin{claim} \label{claim:W_small}
    In the event $\mathcal{E}$, we have that if $|W| < \ell$, then $D = \varnothing$.
\end{claim}
\begin{proofclaim}
Assume that the event $\mathcal{E}$ occurs and $|W| < \ell$. By \cref{claim:D-W containment giant}, it suffices to show that $w \notin D$ for all $w \in W$.
Let $w \in W$. Note that it suffices to show that for each $* \in \{-, +\}$, we have $\phi_\ell^*(w) = \phi^*(w)$ (since then $\phi^+(w)-\phi^+_\ell(w)=0=\phi^-(w)-\phi^-_\ell(w)$ which implies that $w \notin D$). Let $* \in \{-, +\}$. If $w \in L^*$, then $\phi^*(w) =1$ by definition and since $\cE$ occurs, we have $\partial_{G^*}(w, \ell) \neq   \varnothing$ or $|B_{G^*}(w, \ell)| > \log^4 n$, and therefore $\phi^*_\ell(w)=1$. Hence we may assume that $w \notin L^*$. Thus, by definition, we have $\phi^*(w) = 0$. Note that since $W \coloneqq C_u^- \cup C_v^-$ and $e = uv$, we have that $C_w^- \cup C_w^+ \subseteq W$. Hence $0 < |C_w^*| < \ell$. This implies that $\partial_{G^*}(w, \ell) = \varnothing$ and $|B_{G^*}(w, \ell)| \leq |C_w^*| < \ell \leq \log n$. Hence $\phi_\ell^*(w) = 0 = \phi^*(w)$.
\end{proofclaim}

By \cref{e:var_bound}, the two claims above, and Lemma \ref{l:basicprop} it follows that
\begin{align*}
    \variance{Z - \tZ_\ell} &\leq 2c n \expec{|D|^2 \indev{\cE}} + o(n) \leq 2 cn \expec{|W|^2\ind{|W| \geq \ell}\indev{\cE}} +o(n) \\
    &\leq 2n \cdot c f_2(n,\ell) +o(n) \eqqcolon n \cdot f_4(n,\ell),
\end{align*}
where $\displaystyle \lim_{\substack{ \ell,n \to \infty \\  n \gg \ell}}  f_4(n,\ell) = \lim_{\substack{ \ell,n \to \infty \\  n \gg \ell}} \left( 2c \cdot f_2(n,\ell) + o(1)\right)=0$.
\end{proof}

At this point we are essentially done. \Cref{l:variancediff giant} implies that $\tZ_\ell$ is a good approximation to $Z$, and Lemma \ref{l:trunc} implies that $\hZ_\ell$ is a good approximation to $\tZ_\ell$ and that $\hZ_\ell$ satisfies a central limit theorem. The rest of the proof consists of relatively straightforward calculations making the above precise, keeping careful track of the double limit over $\ell$ and $n$.

\begin{proof}[Proof of \Cref{t:giant}]
Firstly, from Minkowski's inequality (\cref{thm:prob_Minkowski}), we obtain
\begin{align}
\variance{Z - \hZ_\ell}  &= \variance{Z - \tZ_\ell +\tZ_\ell - \hZ_\ell} \nonumber \\
&\leq \left( \sqrt{\variance{Z - \tZ_\ell}}  + \sqrt{\variance{\tZ_\ell - \hZ_\ell}}\right)^2 \nonumber\\
&\leq n\cdot\left( \sqrt{ f_4(n,\ell)} + \sqrt{f_1(n,\ell)}\right)^2, \label{e:variancediff}
\end{align}
where at the last inequality we used \Cref{l:trunc,l:variancediff giant}.

Let us write $\sigma = \sqrt{\variance{Z}}$ and $\widehat{\sigma}_\ell = \sqrt{\variance{\hZ_\ell}}$, $Y \coloneqq Z - \expec{Z}$ and $\hY_\ell = \hZ_\ell - \expec{\hZ_\ell}$, and 
\[
X \coloneqq \frac{Z - \expec{Z}}{\sigma} =  \frac{Y}{\sigma}
, \qquad \hX_\ell \coloneqq \frac{\hZ_\ell - \expec{\hZ_\ell}}{\widehat{\sigma}_\ell} =  \frac{\hY_\ell}{\widehat{\sigma}_\ell}.
\]
Then, by \cref{lem:variance}, we have $\sigma = \Omega(\sqrt{n})$, and so by~\eqref{e:variancediff} together with \cref{thm:prob_Minkowski}, it follows that $\widehat{\sigma}_\ell = \Omega(\sqrt{n})$ as well. Moreover, by \eqref{e:prob_CS},
\begin{align}\label{e:variancequotient1}
\lim_{\substack{ \ell,n \to \infty \\  n \gg \ell}} \left| \frac{(\widehat{\sigma}_\ell){^2}}{\sigma{^2}} - 1 \right| 
&= \lim_{\substack{ \ell,n \to \infty \\  n \gg \ell}}  \frac{\left|\variance{Z}-\variance{\hZ_\ell}\right|}{\sigma{^2}} \nonumber\\ 
&\leq \lim_{\substack{ \ell,n \to \infty \\  n \gg \ell}} \left(\frac{\variance{Z - \hZ_\ell}}{\sigma{^2}} + 2\left(\frac{\variance{Z- \hZ_\ell}}{\sigma{^2}} \cdot \frac{\min\left\{\variance{Z}, \variance{\hZ_\ell}\right\}}{\sigma^2}\right)^{\frac{1}{2}} \right) \nonumber\\
&\leq \lim_{\substack{ \ell,n \to \infty \\  n \gg \ell}} \left(\frac{\variance{Z - \hZ_\ell}}{\sigma{^2}} + 2\left(\frac{\variance{Z- \hZ_\ell}}{\sigma{^2}}\right)^{\frac{1}{2}} \right)= 0.
\end{align}

Then, since $\sigma = \Omega(\sqrt{n})$, by \eqref{e:variancediff} and Chebyshev's inequality \begin{equation}\label{e:inprob0}
\left|\frac{Y - \hY_\ell}{\sigma}\right| \inprob 0 \qquad \text{ as $\ell,n \to \infty$.}
\end{equation}

Furthermore, for any $\ell \in \mathbb{N}$, by \Cref{l:trunc}, we have $\hX_\ell \indist \mathcal{N}(0,1)$ as $n \to \infty$ and so
\begin{equation}\label{e:indistnormal}
\hX_\ell \indist \mathcal{N}(0,1) \qquad \text{ as $\ell,n \to \infty$.}
\end{equation}

Hence, by Slutsky's Theorem (\Cref{thm:Slutsky}), using \eqref{e:variancequotient1} - \eqref{e:indistnormal}

\begin{align} \label{eq:limits}
X  = \frac{Y - \hY_\ell}{\sigma} + \frac{\widehat{\sigma}_\ell}{\sigma} \cdot \hX_\ell \indist 0 + 1 \cdot \mathcal{N}(0,1) = \mathcal{N}(0,1)  \qquad \text{ as $\ell,n \to \infty$.}
\end{align}

However, since $X$ does not depend on $\ell$, it follows that $X \indist \mathcal{N}(0,1)$ as $n \to \infty$, as claimed.
\end{proof}

\section{A central limit theorem for the order of the $k$-core} 

To emphasise the similarities between the proofs of \cref{t:giant,t:core} we will use similar variable names to the previous section (e.g., $R$, $C_w$, $\phi$, and $\phi_\ell$). Nevertheless, the reader should be careful to note that these objects are defined differently in these two sections.

Let $k \geq 3$ be given, let $c > \hc_k$, let $p  = \frac{c}{n}$ and let $G \sim G(n,p)$. Let $K$ be the $k$-core of $G$ and let $R \coloneqq G - V(K)$ be the mantle.
For each $v \in V(R)$, we let $C_v$ be the component of $R$ containing $v$ and for each $v \in V(K)$, we set $C_v = \varnothing$.

We define a function $\phi:V(G) \to \{0,1\}$ by
\[
\phi(v) \coloneqq \begin{cases} 1 &\text{ if } v \in V(K)\\ 0 &\text{ otherwise.} \end{cases}
\]
Note that $Y\coloneqq |V(K)| = \sum_{v\in V(G)} \phi(v)$. We also define a `localised' version as follows. Given $v \in V(G)$ and $\ell \in \mathbb{N}$, we define the \emph{$\ell$-local $k$-core} $K(v,\ell)$ as the maximal subset of $B_G(v,\ell)$ such that $\partial_G(v,\ell) \subseteq K(v,\ell)$ and for every $w \in K(v,\ell) \setminus \partial_G(v,\ell)$, we have that $w$ has at least $k$ neighbours in $K(v,\ell)$.  We note that we could equivalently define $K(v,\ell)$ in terms of a peeling process where, starting with $B_G(v,\ell)$ we recursively delete vertices in $B_G(v,\ell-1)$ of degree less than $k$.\footnote{Here the ball $B_G(v,\ell-1)$ is not updated dynamically in this process} In particular, if a vertex $w$ lies in a component $C_w^\ell$ of $G[B_G(v,\ell) \setminus K(v,\ell)]$, then there is some \emph{$\ell$-local $k$-degenerate ordering} of $C_w^\ell$, that is an ordering of the vertices of $C_w^\ell$ such that each vertex is adjacent to fewer than $k$ vertices which appear before it or are in $\partial_G(v,\ell)$. 
We then define
\[
\phi_{\ell}(v) \coloneqq \begin{cases} 1 &\text{ if } v \in K(v,\ell) \\ 0 &\text{ otherwise.} \end{cases}
\]
If $v$ lies in a component $C_v^\ell$ of $G[B_G(v,\ell) \setminus K(v,\ell)]$, then there is some $\ell$-local $k$-degenerate ordering $\Sigma$ of $C_v^\ell$. Note that $\Sigma$ can be extended to a $k$-degenerate ordering of $C_v$ in $G$ and therefore it does not include any vertices outside $C_v$. Thus $C_v^\ell \subseteq C_v$ and every $\ell$-local $k$-degenerate ordering of $C_v^\ell$ has the property that every vertex is adjacent to fewer than $k$ vertices which appear before it or are in $\partial_G(v,\ell)$ or are in $N_G(C_v^\ell)$. Moreover, note that if $C_v = \varnothing$ (which happens if and only if $v \in V(K)$), then $\phi_\ell(v) =1$.
It follows that $G[C_v]$ together with the edge boundary of $C_v$ in $G$ completely determine $C_v^\ell$. 

Let $\tY_\ell \coloneqq \sum_{v\in V(G)} \phi_\ell(v)$ and viewing $\tY_\ell$ as a weighted neighbourhood count 
with weights in $\{0,1\}$, let $\hY_\ell$ be the truncation of $\tY_\ell$ (as defined in \Cref{subsec:weighted neighbourhoods}).

\begin{lemma}\label{l:variancediff core}
$\variance{Y - \tY_\ell} \leq n \cdot f_5(n,\ell)$, where $\displaystyle \lim_{\substack{ \ell,n \to \infty \\  n \gg \ell}} f_5(n,\ell) = 0$.
\end{lemma}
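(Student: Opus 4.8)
The plan is to mimic exactly the structure of the proof of \Cref{l:variancediff giant}, using the Efron--Stein-based \Cref{lem:ESapp} together with the mantle estimates of \Cref{lem:kcoreprop}. Concretely, I would fix an arbitrary pair $e = \{u,v\} \in \binom{V(G)}{2}$, set $G^+ = G+e$, $G^- = G-e$, and define
\[
D \coloneqq \{ w \in V(G) \colon \phi^+(w) - \phi_\ell^+(w) \neq \phi^-(w) - \phi_\ell^-(w) \},
\]
so that \Cref{lem:ESapp} gives $\variance{Y - \tY_\ell} \leq 2cn\,\expec{|D|^2}$. Conditioning on the event $\cE$ of \Cref{lem:kcoreprop}\cref{i:kcp1} (whose complement contributes only $o(n)$ to the bound, since $|D| \leq n$ and $\prob{\overline{\cE}} = o(n^{-2})$), the task reduces to bounding $\expec{|D|^2 \indev{\cE}}$.

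The combinatorial heart of the argument is the analogue of the two claims in \Cref{l:variancediff giant}. Write $C_w^*$ for the component of $R^* = G^* - V(K^*)$ containing $w$ (empty if $w \in V(K^*)$) and set $W \coloneqq C_u^- \cup C_v^-$. First I would show that, in the event $\cE$, $D \subseteq W$: if $w \notin W$, then $e$ is not incident to $C_w^-$, and since $K^- \subseteq K^+$ (adding an edge can only enlarge the $k$-core, by the peeling characterisation), the component of $w$ in the mantle is unchanged, i.e.\ $G^-[C_w^-] = G^+[C_w^+]$ together with its edge boundary; by the discussion preceding the lemma, $G[C_w]$ and the edge boundary of $C_w$ determine $C_w^\ell$, hence $\phi_\ell^+(w) = \phi_\ell^-(w)$, and clearly $\phi^+(w) = \phi^-(w)$, so $w \notin D$. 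Here one must be a little careful: $K^- \subseteq K^+$ does not immediately give that an individual mantle component is unchanged, but since $e$ does not touch $C_w^-$ and the peeling order inside $C_w^-$ can be run first, the set of vertices of $C_w^-$ that get peeled is the same in $G^+$ and $G^-$, so $C_w^+ = C_w^-$ as vertex sets and the induced graphs plus boundaries coincide. Second, I would show that in $\cE$, if $|W| < \ell$ then $D = \varnothing$: for $w \in W$ we have $C_w^- \cup C_w^+ \subseteq W$, so $|C_w^\pm| < \ell$, which forces $\partial_{G^*}(w,\ell) = \varnothing$ and hence $B_{G^*}(w,\ell) = C_w^*$, whence $K(w,\ell)$ coincides with the genuine local $k$-core of $C_w^*$ and $\phi_\ell^*(w) = \phi^*(w)$, giving $w \notin D$.

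Combining the two claims exactly as in the giant case yields
\[
\variance{Y - \tY_\ell} \leq 2cn\,\expec{|W|^2 \ind{|W| \geq \ell} \indev{\cE}} + o(n) \leq 2cn\, f_3(n,\ell) + o(n) \eqqcolon n \cdot f_5(n,\ell),
\]
using \Cref{lem:kcoreprop}\cref{i:kcp3}, and $\lim_{\ell,n\to\infty,\, n\gg\ell} f_5(n,\ell) = 0$ follows. I expect the main obstacle to be the first claim, specifically justifying cleanly that deleting or adding an edge $e$ disjoint from a mantle component $C_w^-$ leaves that component (as an induced subgraph with its edge boundary) unchanged — this needs the observation that the $k$-core peeling process is confluent and can be scheduled to peel $C_w^-$ first, so the argument should be phrased in terms of the peeling process rather than just the monotonicity $K^- \subseteq K^+$. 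Everything else is a direct transcription of the giant-component proof, with \Cref{l:basicprop} replaced by \Cref{lem:kcoreprop}.
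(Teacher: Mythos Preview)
Your overall strategy matches the paper exactly: apply \Cref{lem:ESapp}, condition on the event $\cE$ from \Cref{lem:kcoreprop}, prove the two claims $D\subseteq W$ and $|W|<\ell\Rightarrow D=\varnothing$, and conclude via \Cref{lem:kcoreprop}\cref{i:kcp3}. Your treatment of the first claim is correct, and in particular you identify the one modification the paper singles out relative to the giant-component argument, namely that one needs the edge boundary of $C_w^-$ (not just $G^-[C_w^-]$) to be unchanged in order to conclude $\phi_\ell^+(w)=\phi_\ell^-(w)$.

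There is, however, a genuine gap in your argument for the second claim. You write that $|C_w^*|<\ell$ ``forces $\partial_{G^*}(w,\ell)=\varnothing$ and hence $B_{G^*}(w,\ell)=C_w^*$''. This is a direct transcription from the giant-component case, but it is \emph{false} here: in the $k$-core setting $C_w^*$ is only the component of $w$ in the mantle $R^*=G^*-V(K^*)$, and $w$ typically has neighbours in $K^*$, so $B_{G^*}(w,\ell)$ can extend far into $K^*$ and $\partial_{G^*}(w,\ell)$ is in general nonempty. The conclusion $\phi_\ell^*(w)=\phi^*(w)$ you want is still true, but the argument must change. Since $|C_w^*|<\ell$, the connected set $C_w^*$ lies inside $B_{G^*}(w,\ell-1)$; now take the $k$-degenerate ordering $v_1,\dots,v_m$ of $C_w^*$ (each $v_i$ has fewer than $k$ neighbours in $\{v_1,\dots,v_{i-1}\}\cup V(K^*)$) and feed its reverse into the $\ell$-local peeling on $B_{G^*}(w,\ell)$. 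At the step where $v_i$ is to be removed, its neighbours in the remaining set lie in $\{v_1,\dots,v_{i-1}\}\cup\big(V(K^*)\cap B_{G^*}(w,\ell)\big)$, hence number fewer than $k$. Thus all of $C_w^*$, including $w$, is peeled, so $w\notin K(w,\ell)$ and $\phi_\ell^*(w)=0=\phi^*(w)$. (The case $C_w^*=\varnothing$, i.e.\ $w\in V(K^*)$, is covered by the observation in the text that then $\phi_\ell(w)=1$.) With this correction the rest of your outline goes through unchanged. The paper's ``verbatim'' is admittedly loose on this point too, but your explicit intermediate assertion $\partial_{G^*}(w,\ell)=\varnothing$ is the step that would not survive.
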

\begin{proof}
     The proof of \Cref{l:variancediff core} follows as the proof of \Cref{l:variancediff giant} verbatim, with \Cref{lem:kcoreprop} in place of \Cref{l:basicprop} and the following minor alterations. In the proof of \cref{claim:D-W containment giant}, instead of the assertion that $G^-[C_w^-]=G^+[C_w^+]$ implies that $\phi^+_\ell(w)=\phi^-_\ell(w)$, we now use that since $G^-[C_w^-]=G^+[C_w^+]$ and the edge boundaries of $C^-_w$ and $C_w^+$ are identical in $G^-$ and $G^+$ respectively, we have that $\phi^+_\ell(w)=\phi^-_\ell(w)$.
     In the proof of \cref{claim:W_small}, we can instead use the following. If $w \in V(K^*)$ (i.e., $\phi^*(w) = 1$), then also $\phi^*_\ell(w) =1$ as observed above. Otherwise, we obtain $0 < |C_w^*| < \ell$ and thus
     $\partial_{G^*}(w, \ell) = \varnothing$. This implies that the $k$-degenerate ordering of $C_w^*$ is also a $\ell$-local $k$-degenerate ordering and thus $\phi^*(w) = 0 = \phi_\ell^*(w)$.
\end{proof}

\begin{proof}[Proof of \Cref{t:core}]
The proof of \Cref{t:core} follows as the proof of \Cref{t:giant} verbatim, with \Cref{l:variancediff core} in place of \Cref{l:variancediff giant}.
\end{proof}

\section*{Acknowledgements}
This research was funded in whole or in part by the Austrian Science Fund (FWF) [10.55776/P36131 (J.Erde),  10.55776/I6502 (M. Kang), 10.55776/ESP3863424 (M. Anastos)] and by the Swiss National Science Foundation (SNSF) [P500-2\_235474] (V. Pfenninger). For open access purposes, the authors have applied a CC BY public copyright license to any author accepted manuscript version arising from this submission. We thank the reviewers for helpful comments on the paper, and for bringing the particular form of Theorem \ref{thm:prob_Minkowski} to our attention.

\bibliographystyle{abbrv}
\bibliography{bib}

\end{document}